\documentclass[reqno, 11pt]{amsart}
\usepackage[utf8]{inputenc}
\usepackage[T1]{fontenc}
\usepackage[english]{babel}
\usepackage{fullpage}
\usepackage{amssymb,amsmath,amscd,mathrsfs,graphicx,amsthm,stmaryrd,amsfonts,amsxtra,thmtools}
\usepackage{bm}
\usepackage{rotating,enumerate,float}
\usepackage{tikz-cd,tikz,verbatim,mathtools,multirow}
\usepackage[pdftex, dvipsnames]{xcolor}
\usepackage{hyperref}
\usepackage{cleveref}

\usepackage[sorting=nyc,doi=false,isbn=false,url=false,giveninits=true,maxbibnames=99]{biblatex}

\tikzstyle{nodal}=[circle,draw,fill=black,inner sep=0pt, minimum width=4pt]

 \DeclareFontFamily{U}{wncy}{}
     \DeclareFontShape{U}{wncy}{m}{n}{<->wncyr10}{}
     \DeclareSymbolFont{mcy}{U}{wncy}{m}{n}
     \DeclareMathSymbol{\Sh}{\mathord}{mcy}{"58} 

 \DeclareSortingScheme{nyc}{
   \sort{
     \field{presort}
   }
   \sort[final]{
     \field{sortkey}
   }
   \sort{
     \field{sortname}
     \field{author}
     \field{editor}
     \field{translator}
     \field{sorttitle}
     \field{title}
   }
   \sort{
     \field{sortyear}
     \field{year}
   }
   \sort{\citeorder}
 }

\newtheorem{theorem}{Theorem}[section]
\newtheorem{proposition}[theorem]{Proposition}
\newtheorem{lemma}[theorem]{Lemma}
\newtheorem{corollary}[theorem]{Corollary}

\theoremstyle{definition}

 \newtheorem*{acknowledgments*}{Acknowledgments}

\newtheorem*{ack}{\textbf{Acknowledgments}}

\theoremstyle{remark}
\newtheorem{remark}[theorem]{Remark}

\newcommand{\Hom}{\mathrm{Hom}}

\newcommand{\Aut}{\mathrm{Aut}}

\newcommand{\QQ}{\mathbb{Q}}
\newcommand{\ZZ}{\mathbb{Z}}

\newcommand{\RR}{\mathbb{R}}

\newcommand{\HH}{\mathbb{H}}

\newcommand{\Pic}{\mathrm{Pic}}
\DeclareMathOperator{\rk}{rk}

\newcommand{\Or}{\mathrm{O}}

\newcommand{\D}{\mathcal{D}}

\title{On symmetries of hyperbolic lattices of large rank}

\author{Torben Grabbel}
\address{Torben Grabbel: ETH Zürich, Rämistrasse 101, 8092 Zürich, Switzerland}
\email{tgrabbel@ethz.ch}

\author{Gebhard Martin}
 \address{Gebhard Martin: Mathematisches Institut \\ Universität Bonn \\ Endenicher Allee 60 \\ 53115 Bonn \\ Germany}
 \email{gmartin@math.uni-bonn.de} 

\author{Giacomo Mezzedimi}
 \address{Giacomo Mezzedimi: Mathematisches Institut \\ Universität Bonn \\ Endenicher Allee 60 \\ 53115 Bonn \\ Germany}
 \email{mezzedim@math.uni-bonn.de}

\author{Maia Raitz von Frentz}
\address{Maia Raitz von Frentz: Mathematisches Institut \\ Universität Göttingen \\ Bunsenstraße 3-5 \\ 37073 Göttingen \\ Germany }
\email{m.raitzvonfrentz@stud.uni-goettingen.de}

\author{Paul Jakob Schmidt}
\address{Paul Jakob Schmidt: Mathematisches Institut \\ Universität Bonn \\ Endenicher Allee 60 \\ 53115 Bonn \\ Germany}
\email{s07pschm@uni-bonn.de}

\date{\today}

\begin{document}

\maketitle

\begin{abstract}
For an even, integral hyperbolic lattice $L$, the symmetry group of $L$ is the quotient of the group of isometries of $L$ by the Weyl subgroup of $(-2)$-reflections. Following Nikulin, the exceptional lattice of $L$ is defined as the sublattice generated by elements that have finite orbit under the symmetry group of $L$. We prove that every hyperbolic lattice of rank at least $46$ has trivial exceptional lattice. In particular, every such lattice admits a symmetry of maximal Salem degree.
\end{abstract}

\section{Introduction}

\subsection{Symmetries and exceptional lattice}

Let $L$ be an even, integral hyperbolic lattice. The quotient $\Aut(\D_L)=\Or^+(L)/W(L)$ of the group of positive isometries of $L$ by the Weyl subgroup of reflections along $(-2)$-roots is called the \emph{symmetry group} of $L$. Foundational work by Nikulin \cite{nikulin.rank.greater.than.five,nikulin.rank.three} and Vinberg \cite{vinberg.rank.four} in the 80s gives a complete classification of hyperbolic lattices with a finite symmetry group. This was recently generalized by Yu \cite{yu.zero.entropy} (in the case of Picard lattices of K3 surfaces) and Brandhorst--Mezzedimi \cite{brandhorst.mezzedimi.borcherds,brandhorst.mezzedimi.abelian}, who classified hyperbolic lattices with a \emph{virtually abelian} symmetry group, that is, containing an abelian subgroup of finite index. The maximum rank of such a hyperbolic lattice is $26$, attained by the unimodular lattice ${\rm II}_{1,25} \coloneqq U\oplus \Lambda$, where $\Lambda$ denotes the Leech lattice.

In order to capture the complexity of symmetry groups of general hyperbolic lattices, Nikulin introduced in \cite{nikulin.elliptic.fibrations} the notion of \emph{exceptional lattice}: If $L$ is a hyperbolic lattice, then the exceptional lattice $E(L)\subseteq L$ is the sublattice generated by elements that have finite orbit under $\Aut(\D_L)$. Following \cite[§4]{nikulin.elliptic.fibrations}, there are three possibilities for the exceptional lattice if it is non-trivial: 
\begin{enumerate}
    \item $E(L)$ is a hyperbolic lattice. Then $E(L)=L$, and in particular $\Aut(\D_L)$ is finite.
    \item $E(L)$ is negative semidefinite. Then it contains an isotropic vector, and therefore $L$ is a \emph{Borcherds lattice} (see \cite[Definition~1.1]{brandhorst.mezzedimi.borcherds}). In particular $\Aut(\D_L)$ is virtually abelian.
    \item $E(L)$ is negative definite. 
\end{enumerate}

As explained above, the lattices satisfying (1) or (2) have already been classified. Moreover, Nikulin proves in \cite[Theorem~4.4]{nikulin.elliptic.fibrations} that there are only finitely many hyperbolic lattices of \emph{fixed} rank $\ge 3$ with non-trivial exceptional lattice. 
The goal of this article is to make the next step towards a full classification also in case (3), by proving that $E(L)$ is in fact trivial in large enough rank. More precisely, we will show the following:

\begin{theorem}\label{thm:maintheorem}
    Let $L$ be a hyperbolic lattice of rank at least $46$. Then, the exceptional 
    lattice of $L$ is trivial. 
\end{theorem}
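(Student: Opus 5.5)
The plan is to argue by contradiction and show that a nonzero $E(L)$ forces too many reflective or $\Aut(\D_L)$-invariant structures. By the trichotomy recalled above, cases (1) and (2) are excluded outright in rank $\ge 46$. Hyperbolic lattices with finite symmetry group have rank at most $19$ (Nikulin, Vinberg), and Borcherds lattices have rank at most $26$ (Brandhorst--Mezzedimi). So it suffices to rule out case (3), where $E\coloneqq E(L)$ is nonzero and negative definite. Since $\Aut(\D_L)$ preserves $E$ and acts on it through a finite group, a finite-index subgroup $G\le\Aut(\D_L)$ fixes $E$ pointwise.

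\textbf{Step 1: reduce to a statement about an orthogonal complement.} Set $M\coloneqq E^\perp\subseteq L$. This is a hyperbolic lattice of rank $\rk L-\rk E$, and $E\oplus M\subseteq L$ has finite index. I would show that $G$ embeds, up to finite index, into $\Or^+(M)$ and preserves the chamber structure induced from $\D_L$. The consequence should be that every symmetry of $L$ that fixes $E$ yields a symmetry of $M$ extending to $L$. Conversely, I would use the standard gluing criterion: an isometry of $M$ acting trivially on the discriminant group $A_M$ extends to $L$ by the identity on $E$. Thus the kernel $\Or(M)^\sharp$ of $\Or(M)\to\Or(A_M)$, modulo reflections, acts on $L$.

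\textbf{Step 2: produce a symmetry moving a given vector of $E$.} For a nonzero $e\in E$, I want $g\in\Or^+(L)$ such that the orbit of $e$ under $\langle g\rangle W(L)$ is infinite. The intended construction is to embed $L$ primitively into a lattice with plenty of $U$ summands. Because $\rk L\ge 46$ is large compared with the length of the discriminant group, which is at most $\rk L$ but can be as large as $\rk L$, I would instead use that $L$ has rank large enough for Nikulin's results. In particular, $L\cong U\oplus L'$ and even $L\cong U\oplus U(?)\oplus\dots$ whenever $\rk L\ge \ell(A_L)+3$. When $\ell(A_L)$ is close to $\rk L$, I would pass to the overlattice or to the rescaled dual $L^\vee(n)$, whose symmetry groups are commensurable with that of $L$. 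The idea is to choose a splitting $L\supseteq U\oplus N$ with $e\in N$, together with a negative definite or indefinite piece containing a $(-2k)$-vector $v$ with $k\neq 1$ whose reflection is integral. Composing such non-$(-2)$ reflections, or Eichler transvections $x\mapsto x+(x\cdot u)w-\dots$ attached to isotropic $u\in U$ and $w\perp u$, gives isometries of infinite order. Eichler transvections move $e$ along an unbounded orbit as soon as $e\cdot w\neq0$, and they act trivially on $A_L$ when $w\in L$ with $u\in U$.

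\textbf{Step 3 (main obstacle): show the orbit stays infinite modulo $W(L)$.} This is the step I expect to be hardest. The isometry $g$ might lie in $W(L)$ up to finite order, or $W(L)$ might recollapse the orbit. To handle this, I would show that $\D_L$ has a face or chamber whose stabilizer contains a conjugate of $g$. Concretely, $E$ consists of vectors invariant under a finite-index subgroup of $\Aut(\D_L)$, so $E\otimes\RR$ meets the closure of a fundamental chamber $\D$ in a way invariant under the finite-index subgroup. A rank count then shows that the orthogonal complement of $E$ in the span of the walls of $\D$ still contains a Borcherds-type or $U\oplus$ root-free sublattice. For rank $\ge 46$ this should force $M$ to contain $U\oplus\Lambda$ or $U\oplus E_8^{k}\oplus\dots$ modulo roots, and from this I would build a parabolic element acting nontrivially on $E$. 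The explicit bound $46$ should come from needing roughly $26+\ell(A)$ or $2\cdot 23$ dimensions. I would try first to reduce via Nikulin's finiteness theorem \cite[Theorem~4.4]{nikulin.elliptic.fibrations}, made effective, to checking genera with $\ell(A_L)\ge \rk L-2$, and handle those by rescaling.
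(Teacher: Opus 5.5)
The reduction to case (3) is fine, but there is a genuine gap. Step 3, which you flag as the main obstacle, is in fact the whole content of the theorem, and the proposal gives no mechanism for it. The ``rank count'' is only asserted to force a useful sublattice, and the bound $46$ is guessed rather than derived.

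Steps 1--2 cannot fill this gap:
\begin{enumerate}
\item The isometries obtained by gluing in Step 1 are the identity on $E$, so they move no vector of $E$.
\item The target of Step 2, an infinite orbit under $\langle g\rangle W(L)$, is the wrong object. $E(L)$ is defined by orbits under the section $\Aut(\D_L)$. An element of $\Or^+(L)$ becomes a symmetry only after composing with a Weyl element, which need not preserve $E$.
\item Your Eichler transvections lie in $\Aut(\D_L)$, rather than merely in $\Or^+(L)$, when $u$ is a cusp and $w$ is orthogonal to all roots of $u^\perp/\langle u\rangle$, and essentially only then. In that case $\psi_w(x)=x+(x.w)u$ for $x\in u^\perp$. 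So every root orthogonal to $u$ is fixed, and your condition $e\cdot w\neq 0$ (with $e\in E$) is never met by such a root.
\item An overlattice need not have a commensurable symmetry group. For example, $U(11)\oplus A_1\subseteq U\oplus A_1$ have symmetry groups $\ZZ$ and a finite group, respectively.
\item Your sentence about $\ell(A_L)$ contradicts itself.
\end{enumerate}

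The paper's proof rests on three ideas that the proposal lacks.

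First, pass to a maximal overlattice $L'$ and use only the inclusion $E(L)\subseteq E(L')$. A discriminant group of length $\ge 4$ contains a nonzero isotropic element, so $\ell(A_{L'})\le 3$. Nikulin's embedding results then give $L'\cong U\oplus E_8^n\oplus N$ with $N$ negative definite of rank $\le 11$. Since $2+8\cdot 4+11=45$, rank $\ge 46$ forces $n\ge 5$; this is where $46$ comes from.

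Second, Nikulin's description $E(L)=\bigcap_{e\in\mathcal E_\infty(L)}(e^\perp)^{(2)}_{\mathrm{pr}}$ turns the Weyl-group problem into a combinatorial one. For each simple root $r$, one must find a cusp $e$ with infinite stabilizer and $e.r>0$. For such $e$ your transvections are available and give $r$ an infinite orbit.

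Third, use $U\oplus E_8^3\cong U\oplus\Lambda$.
\begin{itemize}
\item Cusps with $e^\perp/\langle e\rangle\cong\Lambda$ are root-free, hence have infinite stabilizer. Using several of them shows $E(L')\subseteq N$.
\item Every cusp of $\Lambda^\perp\cong U\oplus E_8^{n-3}\oplus N$ has infinite stabilizer in $L'$, because $\Lambda$ contributes no roots.
\item Explicit extended Dynkin subdiagrams in the root graph of $U\oplus E_8^2\oplus N$ then yield cusps meeting every simple root of $N$ positively.
\end{itemize}
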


In particular, the classification of hyperbolic lattices with non-trivial exceptional lattice becomes a finite problem:

\begin{corollary} \label{cor:finitelymany}
    Up to isometry, there are only finitely many hyperbolic lattices of rank $\ge 3$ with non-trivial exceptional lattice.
\end{corollary}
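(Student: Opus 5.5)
We deduce the corollary from Theorem~\ref{thm:maintheorem} together with Nikulin's finiteness result \cite[Theorem~4.4]{nikulin.elliptic.fibrations}, quoted in the introduction: for each fixed rank $r\ge 3$ there are, up to isometry, only finitely many hyperbolic lattices of rank $r$ with non-trivial exceptional lattice.

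Let $L$ be a hyperbolic lattice of rank $r\ge 3$ with $E(L)\neq 0$. By Theorem~\ref{thm:maintheorem}, which gives $E(L)=0$ whenever $\rk L\ge 46$, we must have $3\le r\le 45$. For each of these finitely many values of $r$, Nikulin's theorem leaves only finitely many isometry classes of such $L$. Taking the union over $r=3,\dots,45$ gives a finite union of finite sets, which proves the corollary.

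The only point to check is that Nikulin's theorem applies in every rank from $3$ to $45$. It does, since its hypothesis is exactly $r\ge 3$ and the rank is then fixed. Note that the bound $r\ge 3$ cannot be removed: in rank $2$, for example, there are infinitely many lattices with finite symmetry group, and for those $E(L)=L$. All the substantive work therefore lies in Theorem~\ref{thm:maintheorem}; once it is available, the corollary is a formal consequence.
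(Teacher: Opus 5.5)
Your proposal is correct and follows essentially the paper's route. The paper derives the corollary the same way: \Cref{thm:maintheorem} confines any lattice with $E(L)\neq 0$ to ranks $3,\dots,45$, and Nikulin's fixed-rank finiteness theorem then leaves only finitely many isometry classes in each of these ranks.
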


Observe that in rank $2$, a hyperbolic lattice has non-trivial exceptional lattice if and only if it has finite symmetry group, or equivalently if and only if it represents $0$ or $-2$ (see \cite[Corollary~3.4]{galluzzi.lombardo.peters}).

Given the finiteness result in \Cref{cor:finitelymany}, it is natural to ask what is the largest rank of a hyperbolic lattice with non-trivial exceptional lattice. If $L\coloneqq {\rm II}_{1,25}$, then already Borcherds noted in \cite{borcherds.leech} that $E(L)$ has rank $1$, and it is generated by the unique cusp (i.e. primitive, isotropic vector in the closure of the fundamental domain $\D_L$) with infinite stabilizer. We do not know any example of larger rank. In the spirit of the classifications in \cite{nikulin.rank.greater.than.five,nikulin.rank.three,vinberg.rank.four,brandhorst.mezzedimi.borcherds}, it would be interesting to have a complete classification of hyperbolic lattices with a non-trivial exceptional lattice, thus completing Nikulin's program in \cite{nikulin.elliptic.fibrations}.

\subsection{Motivation from K3 surfaces and applications} 
If $L = \Pic(X)$ is the Picard lattice of a complex K3 surface $X$, then the classes of effective $(-2)$-curves in $L$ determine a choice of fundamental domain $\D_L$ for the action of $W(L)$ on $L$ and the associated action
\begin{eqnarray*}
\Aut(X) & \to & \Aut(\D_L) \\
g & \mapsto & g^*
\end{eqnarray*}
has finite kernel and cokernel. The \emph{cusps} of $L$ that we consider in this article correspond to elliptic fibrations of $X$ and cusps with infinite stabilizer correspond to elliptic fibrations whose Jacobian has infinite Mordell--Weil group. If $X$ admits at least one elliptic fibration with infinite automorphism group, then the exceptional lattice is the primitive sublattice of $\Pic(X)$ generated by all $(-2)$-curves that are contained in fibers of all such elliptic fibrations (cf. \Cref{prop:exceptionalIntersection}).

To measure the complexity of an automorphism $g$, one considers the characteristic polynomial of $f=g^*$, which factors over $L_{\mathbb{R}}$ as $S(x)C(x)$, where $S(x)$ is a Salem polynomial (or a quadratic polynomial or trivial), called the \emph{Salem factor}, and $C(x)$ is a product of cyclotomic polynomials (see \cite[Discussion before D\'efinition 1.2]{cantat.dynamique}). The Salem factor is particularly interesting, since the logarithm of its largest root coincides with the topological entropy of $X$ \cite{Gromov,Yomdin}. Thus, to measure the complexity of this number, we define the \emph{Salem degree} of $f$ as the degree of $f$ and, since this degree is always even, we say that $f$ has \emph{maximal Salem degree} if its Salem degree is $\rk(L)$ if $\rk(L)$ is even or $\rk(L)-1$ if $\rk(L)$ is odd.

Now, observe that these definitions do not use the fact that there is an underlying K3 surface $X$ at all. In fact, even the choice of $\D_L$, which is needed to consider $\Aut(\D_L)$ as a subgroup of $\Or^+(L)$, is irrelevant, since any two choices lead to conjugate subgroups.
Our results on hyperbolic lattices can therefore be used to apply statements in the literature about automorphisms of K3 surfaces in terms of the exceptional lattice to the symmetry group $\Aut(\D_L)$. One such statement is the following, due to Yu \cite[Theorem 6.2]{yu.elliptic.fibrations}. The original proof in the case of K3 surfaces of even Picard rank extends easily to the abstract setting and we can rephrase the result in our terminology as follows:

\begin{theorem}[Yu]
Let $L$ be a hyperbolic lattice with trivial exceptional lattice $E(L)$. Then:
\begin{enumerate}
    \item The only $\Aut(\D_L)$-invariant $\RR$-linear subspaces of $L_\RR$ are $\{0\}$ and $L_\RR$.
    \item There exists a symmetry $f\in \Aut(\D_L)$ of maximal Salem degree.
\end{enumerate}
If moreover $\rk(L)\ge 4$ is even, then the conditions $E(L)=0$, (1) and (2) are all equivalent.
\end{theorem}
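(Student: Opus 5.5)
The plan is to reduce everything to a Zariski-density statement for $G\coloneqq\Aut(\D_L)\subseteq \Or^+(L)$, after choosing $\D_L$ so that $G$ is an actual subgroup. I will use \Cref{prop:exceptionalIntersection} and the standard structure of cusps. By that proposition, $E(L)=0$ together with $\rk(L)\geq 3$ forces two things. First, $G$ is neither finite nor virtually abelian. Second, $G$ contains parabolic elements attached to cusps $e$ with infinite stabilizer. For such a cusp, the stabilizer $G_e$ contains a free abelian group of Eichler-type transvections $x\mapsto x+(x\cdot e)t-\bigl((x\cdot t)+\tfrac12 (t^2)(x\cdot e)\bigr)e$, with $t$ ranging over a lattice $T_e\subseteq e^\perp/e$. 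Moreover, the vectors of $e^\perp/e$ orthogonal to $T_e$ are exactly the ones with finite $G_e$-orbit.

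For part (1), I first show that the vectors of $L_\QQ$ with finite $G$-orbit span exactly $E(L)_\QQ$. Hence $E(L)=0$ means that no nonzero rational vector has finite orbit. Next, let $H$ be the Zariski closure of $G$ in $\Or(L_\QQ)$, which is a $\QQ$-group, and let $H^0$ be its identity component. Any nonzero proper $H^0$-invariant rational subspace $V$ yields the finite-index subgroup $G\cap H^0$ preserving $V$, and hence a $G$-invariant configuration of finitely many rational subspaces. I will then run through the three possible signatures of such a $V$:
\begin{enumerate}
\item If $V$ (or $V^\perp$) is negative definite, then $G$ acts on it through a finite group, since the action is discrete and preserves a definite lattice. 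This yields rational vectors with finite orbit, which is impossible.
\item If $V$ is degenerate, then its radical $V\cap V^\perp$ is a $G$-invariant isotropic line, up to finitely many choices. It is then spanned by a cusp fixed by a finite-index subgroup, which makes $G$ virtually abelian. This is also impossible.
\item If $V$ is hyperbolic, then $V^\perp$ is negative definite, and I am back in the first case.
\end{enumerate}
So $H^0$ acts irreducibly on $L_\QQ$. The unipotent transvections lie in $H^0$ and generate, together with their $G$-conjugates, a noncompact group acting irreducibly. From this I plan to conclude that $H^0=\mathrm{SO}(L_\QQ)$, using that it is a reductive, indeed semisimple, irreducible subgroup of $\mathrm{SO}(n,1)$ containing unipotents. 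Since $\mathrm{SO}(L_\RR)$ acts absolutely irreducibly on $L_\RR$, every $G$-invariant $\RR$-subspace is $H$-invariant and therefore trivial. The step I expect to be the main obstacle is this identification $H^0=\mathrm{SO}(L)$. I would first try to get it from the Lie algebra spanned by the logarithms of the transvections for all cusps in one $G$-orbit. These logarithms are the elements $e\wedge t$, and an irreducible span of such elements should generate $\mathfrak{so}(L)$.

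For part (2), I will use Zariski density. The Salem factor of $f\in G$ has degree $\rk(L)$ or $\rk(L)-1$ exactly when the cyclotomic part $C(x)$ has degree at most $1$, or at most $0$ in even rank. This is the case whenever $f$ is regular semisimple and its characteristic polynomial has no repeated cyclotomic roots beyond the forced eigenvalue $\pm1$ in odd rank. By the Prasad--Rapinchuk theorem on generic elements of Zariski-dense subgroups, applied to the arithmetic situation $G\subseteq \mathrm{SO}(L)(\ZZ)$ with $G$ Zariski dense in $\mathrm{SO}(L)$, there is an $f\in G$ whose maximal torus is $\QQ$-anisotropic of generic type. Its characteristic polynomial is then $S(x)$, or $(x\mp1)S(x)$, with $S$ irreducible, which is the required maximal Salem degree. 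Alternatively, one can take suitable products of high powers of two parabolics at different cusps and argue via ping-pong that the resulting loxodromic element has no rational eigenspaces.

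For the final equivalence in even rank $\geq 4$, the implication $E(L)=0\Rightarrow$ (1) and (2) is what was just proved. If (1) holds, then $E(L)_\RR$ is a $G$-invariant subspace. It cannot be all of $L_\RR$, since then $G$ would be finite, and $\rk(L)\geq 4$ together with (1) rules this out. Hence $E(L)=0$. If (2) holds in even rank, the characteristic polynomial of $f$ is an irreducible Salem polynomial of degree $\rk(L)$. So $f$ has no nonzero proper invariant rational subspace. Since $E(L)_\QQ$ is such a subspace and cannot be all of $L_\QQ$ (because $G$ is infinite), it must vanish.
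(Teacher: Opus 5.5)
The paper gives no proof of this statement. It quotes Yu's Theorem~6.2 and only asserts that his K3 argument carries over, so your proposal has to stand on its own. Its overall plan is sound: Zariski density of $G=\Aut(\D_L)$, then a Prasad--Rapinchuk element for (2). You should require that element to also be $\RR$-regular, so that $S$ is not cyclotomic. Your argument for the converse in even rank is fine. The proof of (1), however, has two genuine gaps, and (2) inherits them.

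First, \Cref{prop:exceptionalIntersection} does not give you cusps with infinite stabilizer: $\mathcal E_\infty(L)\neq\emptyset$ is a \emph{hypothesis} there (cf.\ \Cref{rmk: cusp exists}(2)). Your inference ``$E(L)=0$ and $\rk(L)\ge 3$ imply $G$ contains parabolics'' is in fact false. Take $L=\langle 28\rangle\oplus\langle -4\rangle^{\oplus 3}$. It is anisotropic, since $7a^2=b^2+c^2+d^2$ has only the trivial solution, so it has no cusps at all. It also has no roots, so $\Aut(\D_L)=\Or^+(L)$, which is Zariski dense by Borel density, and hence $E(L)=0$. For such lattices your transvections do not exist. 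You must either assume $\mathcal E_\infty(L)\neq\emptyset$ explicitly, or treat the cusp-free case by other means. The first option is Yu's K3 setting, and it is also how the paper uses the theorem, via \Cref{rmk: cusp exists}. Your tacit restriction to $\rk(L)\ge 3$ is also genuinely needed and should be stated. In rank $2$, (1) fails: take the even lattice with form $6x^2+2xy-38y^2$. It has no roots, no isotropic vectors, and no improper automorphs, since the class of $3x^2+xy-19y^2$ has order $3$. So $E(L)=0$, yet $\Aut(\D_L)$ preserves each isotropic line.

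Second, even with some $e\in\mathcal E_\infty(L)$ at hand, your signature analysis only excludes invariant \emph{rational} subspaces, while (1) is about real ones. $\QQ$-irreducibility alone does not give $\RR$-irreducibility: $\mathrm{Res}_{K/\QQ}\Or(W)$ with $K$ real quadratic is $\QQ$-irreducible but preserves the irrational splitting $W\otimes_{\sigma_1}\RR\oplus W\otimes_{\sigma_2}\RR$. The step $H^0=\mathrm{SO}(L)$, where the unipotents must do this work, is only asserted (``should generate $\mathfrak{so}(L)$'').

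One way to close the gap is as follows. Let $V\subseteq L_\RR$ be invariant under a finite-index subgroup $G'\subseteq G$. Take $x\in V$ and $0\neq y\in R^\perp_W$ (your $T_e$, embedded as in \Cref{rmk:take.care.of.degenerate.lattice}). Since $\psi_y^m=\psi_{my}$, the vector
\[
\psi_{my}(x)=x+m\bigl((x.y)e-(x.e)y\bigr)-\tfrac{m^2}{2}(x.e)(y.y)e
\]
lies in $V$ for all $m\in N\ZZ$, for some $N\ge 1$. Hence each coefficient in $m$ lies in $V$. Using $y^2<0$, this gives a dichotomy: either $V\subseteq e^\perp$, or $e\in V$ and $R^\perp_W\subseteq V$. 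The same holds for every cusp $ge$ with $g\in G'$.
\begin{itemize}
\item In the second case, $V$ contains the real span of the nonzero rational $G'$-invariant subspace generated by $G'e$ and $G'R^\perp_W$.
\item In the first case, $V$ is orthogonal to the rational span of $G'e$.
\end{itemize}
Your $\QQ$-irreducibility holds for every finite-index subgroup, so it gives $V=L_\RR$ or $V=0$ respectively. Thus (1) holds for all finite-index subgroups, so $H^0(\RR)^\circ$ acts irreducibly on $L_\RR$. The theorem that a connected irreducible subgroup of $\mathrm{SO}^0(1,n)$ is the whole group (Di Scala--Olmos) then yields $H^0=\mathrm{SO}(L)$, which is exactly what your appeal to Prasad--Rapinchuk requires.
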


By \Cref{rmk: cusp exists}, there always exists a cusp of infinite stabilizer if ${\rm rk}(L) \geq 20$ and by our main \Cref{thm:maintheorem}, the exceptional lattice is trivial as soon as ${\rm rk}(L) \geq 46$. We thus obtain the following application to Salem degrees:

\begin{corollary}
Let $L$ be a hyperbolic lattice of rank at least $46$. Then, the following hold:
\begin{enumerate}
\item There are no non-trivial $\Aut(\D_L)$-invariant $\RR$-linear subspaces of $L_{\mathbb{R}}$.
\item There exists an isometry $f \in \Aut(\D_L)$ of maximal Salem degree.
\end{enumerate}
\end{corollary}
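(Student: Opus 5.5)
The corollary follows by combining \Cref{thm:maintheorem} with the theorem of Yu stated above, and I will present it that way. Let $L$ be a hyperbolic lattice with $\rk(L)\ge 46$. By \Cref{thm:maintheorem}, the exceptional lattice satisfies $E(L)=0$. Yu's theorem then gives both assertions directly: part (1) yields the absence of non-trivial $\Aut(\D_L)$-invariant $\RR$-linear subspaces of $L_\RR$, and part (2) yields a symmetry $f\in\Aut(\D_L)$ of maximal Salem degree.

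One point needs checking, namely that Yu's result, which was proved for K3 surfaces, really applies in the abstract lattice setting. Yu's argument uses elliptic fibrations with infinite Mordell--Weil group; in lattice language these are cusps of $\D_L$ with infinite stabilizer. I would therefore invoke \Cref{rmk: cusp exists}, which guarantees that such a cusp exists whenever $\rk(L)\ge 20$. This condition certainly holds when $\rk(L)\ge 46$.

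I would also make sure that Yu's hypothesis is the same as the notion of exceptional lattice used here. \Cref{prop:exceptionalIntersection} describes $E(L)$ as the primitive sublattice generated by the $(-2)$-roots lying in the fibers of all cusps with infinite stabilizer. Its triviality is therefore exactly the hypothesis in Yu's statement.

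No parity restriction on the rank is needed. The implication from $E(L)=0$ to (1) and (2) holds in every rank, and the evenness assumption in Yu's theorem is used only for the converse equivalences. The only genuine obstacle in this proof is \Cref{thm:maintheorem} itself, which I assume here.
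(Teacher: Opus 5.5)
Your proposal is correct and follows the same route as the paper. The paper also obtains the corollary by combining \Cref{thm:maintheorem}, which gives $E(L)=0$ for $\rk(L)\ge 46$, with Yu's theorem, and it likewise uses \Cref{rmk: cusp exists} to ensure that a cusp with infinite stabilizer exists, so that the description of $E(L)$ in \Cref{prop:exceptionalIntersection} and Yu's framework apply. Your observation that the parity assumption is needed only for the converse equivalences matches how the paper states Yu's result.
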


\subsection{Strategy of the proof and outline}

The starting point of the proof of \Cref{thm:maintheorem} is an explicit description of the exceptional lattice of hyperbolic lattices, originally due to Nikulin \cite[Theorem~4.2]{nikulin.elliptic.fibrations} (see also \Cref{prop:exceptionalIntersection}). More precisely, it asserts that $E(L)$ is trivial as soon as every simple $(-2)$-root $r\in L$ admits a cusp with infinite stabilizer $e\in \overline{\D}_L$ such that $e.r>0$.

Now let $L$ be a hyperbolic lattice of rank at least $46$. By taking an arbitrary maximal overlattice of $L$, we reduce to lattices of the form $L=U\oplus E_8^n\oplus M$, where $n\ge 5$ and the rank of $M$ is at most $11$ (cf. \Cref{prop:structure}). By using the isometry $U\oplus E_8^3\cong U\oplus \Lambda$, and the fact that the Leech lattice $\Lambda$ has no $(-2)$-root, we deduce that the exceptional lattice $E(L)$ is contained in $M$. Since there are only finitely many possibilities for the root part of $M$, we are able to show in \Cref{section:proof} that $E(L)=0$ for all of them, thus concluding the proof of \Cref{thm:maintheorem}.

Given the nature of our approach, we expect the bound in \Cref{thm:maintheorem} to be quite far from sharp. Moreover, it is probable that a (possibly computer assisted) refinement of our argument in \Cref{section:proof} via extended Dynkin diagrams leads to a much better bound in \Cref{thm:maintheorem}.

We briefly outline the structure of the paper. In Section 2 we recall basic properties of negative definite and hyperbolic lattices, and prove the structure result for maximal overlattices of hyperbolic lattices in \Cref{prop:structure}. In Section 3 we review Nikulin's definition of the exceptional lattice, and the explicit description in \Cref{prop:exceptionalIntersection}. Finally, in Section 4 we prove \Cref{thm:maintheorem}.

\begin{ack}
We gratefully acknowledge the hospitality and support of the Max-Planck-Institut f\"ur Mathematik Bonn, where we started working on this article during an internship program.
\end{ack}

\section{Preliminaries on lattices}

\subsection{Notation}

A \emph{lattice} $L$ is a free $\ZZ$-module of finite rank with a symmetric, non-degenerate bilinear form $(-.-): L \times L \to \mathbb{Z}$. For a lattice $L$, denote by $L(m)$ for 
$m\in \ZZ$ the scaled lattice, that is, $L(m)$ has the same underlying group as $L$ but 
$(u.v)_{L(m)}=m\cdot (u.v)_L$ for every pair of vectors $u,v$. We call $L$ \emph{even} 
if $(u.u)$ is even for all $u \in L$, and odd otherwise. To shorten notation, we will assume 
that all lattices are even.

The \emph{rank} $\mathrm{rk}(L)$ of a lattice $L$ is the rank of $L$ as a $\ZZ$-module. 
The \emph{signature} of a lattice is the signature of the corresponding form on 
$L\otimes\RR$. A lattice is called \emph{hyperbolic} if its signature is 
$(1,\mathrm{rk}(L)-1)$.

The \emph{discriminant} $\mathrm{disc}(L)$ of a lattice $L$ is the absolute value of 
the determinant of the Gram matrix with respect to any basis of $L$. A lattice $L$ is 
called \emph{unimodular} if $\mathrm{disc}(L)=1$. The \emph{dual lattice} is defined as 
$L^{\lor}=\Hom( L,\ZZ)\subset L\otimes\QQ$,
with the natural restriction of the bilinear form on $L\otimes \QQ$. The 
\emph{discriminant group} $A_L$ is defined as the finite abelian group $L^{\lor}/L$ 
with its finite quadratic form $q$ with values in $\QQ/2\ZZ$. The discriminant 
$\mathrm{disc}(L)$ is equal to $|A_L|$. The \emph{length} $\ell(A_L)$ is defined as the 
minimum number of generators of $A_L$, while the \emph{$p$-length} $\ell_p(A_L)$ is 
defined as the minimum number of generators of the $p$-Sylow subgroup of $A_L$. Observe 
that the length $\ell(A_L)$ coincides with the maximum of the $p$-lengths. 

A \textit{sublattice} of a lattice $L$ is a lattice $M$ with an embedding 
$M\hookrightarrow L$ such that the bilinear form on $L$ restricts to that on $M$. If additionally $\rk M=\rk L$ (or equivalently $[L:M]<\infty$), we call $L$ an \textit{overlattice} of $M$.
It is a standard fact (found e.g. in \cite[Prop.~1.4.1]{Ni80}) that overlattices of 
$L$ correspond to subgroups of $A_L$ on which the quadratic form is trivial. The \emph{saturation} of a sublattice $M\subseteq L$ is defined as $M_{\mathrm{pr}}=(M\otimes \mathbb{Q})\cap L$, i.e. the largest overlattice of $M$ that fits into $M\hookrightarrow M_{\mathrm{pr}}\hookrightarrow L$.

Given two lattices $L_1$ and $L_2$, their \textit{direct sum} $L_1\oplus L_2$ is simply their direct sum as abelian groups, with the bilinear form obtained by setting $(u_1.u_2)=0$ whenever $u_1\in L_1$, $u_2\in L_2$ and such that the inclusions $L_1,L_2 \hookrightarrow L_1 \oplus L_2$ are sublattices.
Note that if a unimodular lattice $M$ is a sublattice of a lattice $L$, then it is already a direct summand. Indeed, for $u\in L$ we can consider the functional $\psi_u\colon M \to \ZZ$ such that $\psi_u(v)=(u.v)$. By unimodularity of $M$ there exists a unique $u_M\in M$ such that $(u_M.v)=\psi_u(v)=(u.v)$ for every $v\in M$, so $u-u_M\in M^\perp$ and thus the homomorphism $L\to M \oplus M^\perp$ sending $u$ to $(u_M,u-u_M)$ is an isomorphism.

Denote by $U$ the unique hyperbolic, unimodular lattice of rank $2$. Two lattices with 
the same signature $L,L'$ are said to be \emph{in the same genus} if $A_L$ and $A_{L'}$ are 
isomorphic as finite quadratic modules. This is equivalent to the condition that the 
lattices $U\oplus L$ and $U\oplus L'$ are isometric (see \cite[Corollary~1.13.4]{Ni80}).

\subsection{Roots and root lattices}

A \emph{root} of a lattice $L$ is an element $r\in L$ of norm $-2$. The \emph{root part} $L_{\mathrm{root}}$ of a lattice $L$ is the sublattice generated by its roots. 
A negative definite lattice is a \emph{root lattice} if it 
is equal to its root part.
In this section, let $R$ be a root lattice.
We denote the finite set of roots of $R$ by $\Delta_R$. 

Every root lattice decomposes as the direct sum of ADE lattices, consisting of the 
infinite series $A_n$, $n\ge 1$, $D_n$, $n\ge 4$, and the three lattices $E_6, E_7$ and 
$E_8$ \cite[Theorem~1.2]{Eb13}. We will always consider ADE lattices with the standard $\ZZ$-basis $\{r_1,\ldots,r_n\}$ of roots given by the corresponding Dynkin diagram (see \cite[Theorem~1.1]{Eb13}). If a root lattice $R$ decomposes as a direct sum $R_1\oplus \ldots \oplus R_m$, 
where the $R_i$ are ADE lattices, we observe that 
$\Delta_R = \Delta_{R_1} \cup \ldots \cup \Delta_{R_m}$.

We will need to understand for which root lattices $R$, the sum $U \oplus R$ contains $U \oplus E_8$. If $R \neq E_8$, we obviously need ${\rm rk}(R) \geq 9$. This is enough if $R$ is an ADE lattice:

\begin{proposition} \label{prop: big roots contain UE8}
Let $R$ be an ADE lattice of rank $n \geq 9$. Then, $U \oplus R$ contains $U \oplus E_8$.
\end{proposition}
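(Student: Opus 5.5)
Since $U$ is unimodular, $U\oplus R$ contains $U\oplus E_8$ exactly when $U\oplus R\cong U\oplus E_8\oplus R'$ for some negative definite even lattice $R'$ of rank $n-8$; any $U\oplus E_8$ inside is automatically an orthogonal direct summand. By \cite[Corollary~1.13.4]{Ni80}, $U\oplus R\cong U\oplus E_8\oplus R'$ as soon as $E_8\oplus R'$ and $R$ are in the same genus, i.e. $A_{R'}\cong A_R$ as finite quadratic forms. So the plan is to exhibit, for each ADE lattice $R$ of rank $n\ge 9$, a negative definite even lattice $R'$ of rank $n-8$ whose discriminant form is isomorphic to $A_R$. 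Nikulin's existence theorem \cite[Theorem~1.10.1]{Ni80} gives such an $R'$ when the length is small: it suffices that $\ell(A_R) < n-8$, with the signature condition automatic since $E_8$ has signature $\equiv 0 \pmod 8$. The careful version with $\ell(A_R)\le n-8$ needs extra local conditions.

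\textbf{Large ranks.} The discriminant groups of ADE lattices have length at most $2$:
\begin{itemize}
\item $A_{A_n}=\ZZ/(n+1)$, of length $1$;
\item $A_{D_n}$ is $\ZZ/4$ or $(\ZZ/2)^2$, of length at most $2$;
\item $E_n$ with $n\ge 9$ does not occur.
\end{itemize}
Hence for $n-8\ge 3$, i.e. $n\ge 11$, Nikulin's theorem applies directly. For $A_n$ we even get it for $n\ge 10$ since the length is $1<2$.

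\textbf{Small cases.} The remaining cases are $A_9, D_9, D_{10}$, which I would treat by hand with explicit lattices:
\begin{itemize}
\item $A_9$: take $R'=A_1$, i.e. the root lattice $\langle -2\rangle$. We need $A_{A_9}=\ZZ/10$ with $q=-9/10$ to match $\ZZ/2$ with $q=-1/2$, which fails since the groups differ. So we need a rank $1$ lattice $\langle -10\rangle$, whose form $-1/10$ must equal $-9/10\cdot u^2$ for some unit $u$ mod $20$. Take $u=3$: $-81/10\equiv -1/10 \pmod{2}$. So $R'=\langle -10\rangle$ works.
\item $D_9$: $A_{D_9}=\ZZ/4$ with $q=-9/4\equiv -1/4$. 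Take $R'=\langle -4\rangle$.
\item $D_{10}$: $A_{D_{10}}=(\ZZ/2)^2$ with values $-10/4\equiv -1/2,\ -1/2,\ -1$, which is the discriminant form of $A_1^2$. Take $R'=A_1\oplus A_1$.
\end{itemize}

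The same explicit approach handles all cases uniformly and avoids invoking Theorem 1.10.1 at all:
\begin{itemize}
\item $A_n$: take $R'$ to be any rank $n-8$ lattice with discriminant form $-n/(n+1)$. Such a lattice exists, e.g. $A_{n-9}\oplus\langle\cdot\rangle$ glued appropriately; this is the harder case.
\item $D_n$: take $R'=D_{n-8}$ for $n-8\ge 4$, since $A_{D_m}$ depends only on $m \bmod 8$. The cases $D_9, D_{10}, D_{11}, D_{12}$ use $\langle-4\rangle$, $A_1^2$, $A_3$, $D_4$ respectively, with $A_3$ and $D_4$ being the natural $D_3$ and $D_4$ choices.
\end{itemize}

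The main obstacle is the $A_n$ family, where I would rely on Nikulin's existence theorem for $n\ge 10$ and the explicit $\langle -10\rangle$ for $n=9$.
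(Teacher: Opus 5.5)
Your proof is correct and is essentially the paper's argument. The paper handles the generic case by applying \cite[Corollary~1.13.5]{Ni80} twice (splitting off $E_8$, then $U$), which gives exactly your threshold $n-8>\ell(A_R)$, whereas you reach the same threshold via the existence theorem for $R'$ plus uniqueness of $U\oplus R$ in its genus. The three leftover cases $A_9$, $D_9$, $D_{10}$ are then settled exactly as in the paper, through the genus equivalences with $E_8\oplus\langle -10\rangle$, $E_8\oplus\langle -4\rangle$ and $E_8\oplus A_1\oplus A_1$ (your sketched uniform construction for $A_n$ is not needed).
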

\begin{proof}
From the classification of ADE lattices, we have $R \in \{A_n,D_n\}$, hence $\ell(A_R) \leq 2$ and even $\ell(A_R) = 1$ unless $R = D_n$ with $n$ even (see \cite[Table~1.1]{Eb13}).
Using \cite[Corollary 1.13.5]{Ni80} and the inequality
$$
{\rm rk}(U \oplus R) \geq 11 \geq 9 + {\ell}(A_R),
$$
we conclude that $U \oplus R \cong E_8 \oplus M$ for a hyperbolic lattice $M$ of rank $n -6 \geq 3$ with $\ell(A_M) = \ell(A_R)$. By \cite[Corollary 1.13.5]{Ni80}, we are done if $n - 6 \geq 3 + \ell(A_R)$. 
It thus remains to consider the cases $R \in \{D_9,D_{10},A_9\}$. A straightforward computation shows that the discriminant group of $D_9$ (resp. $D_{10}$, $A_9$) is isomorphic as a finite quadratic module to the discriminant group of $\langle -4\rangle$ (resp. $A_1\oplus A_1$, $\langle -10\rangle$), see e.g. \cite[Table~1]{gvirtz.mezzedimi}. Therefore $D_9$ (resp. $D_{10}$, $A_9$) is in the same genus of $E_8\oplus \langle -4\rangle$ (resp. $E_8\oplus A_1\oplus A_1$, $E_8\oplus \langle -10\rangle$), from which
$$U\oplus D_9\cong U\oplus E_8\oplus \langle -4\rangle, \quad U\oplus D_{10} \cong U\oplus E_8\oplus A_1\oplus A_1, \quad U\oplus A_9\cong U\oplus E_8\oplus \langle -10\rangle. $$ 
\end{proof}

Every ADE lattice $R$ admits a \emph{highest root}, which can be defined as the unique root $r_0$ of $R$ such that $r_0.r_i \le 0$ for every root in the standard basis $\{r_1,\ldots,r_n\}$.
For the convenience of the reader, we recall the coefficients of the highest root of each ADE lattice in \Cref{highest.roots}, and refer to \cite[§1.5]{Eb13} for more details.

  \begin{figure}[h!]
    \centering
    \begin{tikzpicture}[scale=0.8, label distance=2]
        \begin{scope}[xshift=-3cm]
        \node (A0) at (-1,0) [nodal, label=below:$1$] {};
        \node (A1) at (-2,0) [nodal, label=below:$1$] {};
        \node (B0) at ( 1,0) [nodal, label=below:$1$] {};
        \node (B1) at ( 2,0) [nodal, label=below:$1$] {};
        \draw (A0)--(A1) (B0)--(B1);
        \draw [dashed] (A0)--(B0);
        \end{scope}
        \begin{scope}[xshift=5cm]
        \node (A0) at (-1,0) [nodal, label=below:$2$] {};
        \node (A1) at (-2,0) [nodal, label=below:$1$] {};
        \node (B0) at ( 1,0) [nodal, label=below:$2$] {};
        \node (B1) at ( 2,0) [nodal, label=below:$2$] {};
        \node (C1) at (3,1) [nodal, label=right:$1$] {};
        \node (C2) at (3,-1) [nodal, label=right:$1$] {};
        \draw (A0)--(A1) (B0)--(B1) (C1)--(B1)--(C2);
        \draw [dashed] (A0)--(B0);
        \end{scope}
        \begin{scope}[yshift=-3cm]
        \begin{scope}[xshift=-6cm]
        \node (C0) at ( 0,0) [nodal, label=below:$3$] {};
        \node (A1) at (-1,0) [nodal, label=below:$2$] {};
        \node (A2) at (-2,0) [nodal, label=below:$1$] {};
        \node (B1) at ( 1,0) [nodal, label=below:$2$] {};
        \node (B2) at ( 2,0) [nodal, label=below:$1$] {};
        \node (C1) at ( 0,1) [nodal, label=right:$2$] {};
        \draw (C0)--(A1)--(A2) (C0)--(B1)--(B2) (C0)--(C1);
        \end{scope}
        \begin{scope}
        \node (C0) at ( 0,0) [nodal, label=below:$4$] {};
        \node (A1) at (-1,0) [nodal, label=below:$3$] {};
        \node (A2) at (-2,0) [nodal, label=below:$2$] {};
        \node (B1) at ( 1,0) [nodal, label=below:$3$] {};
        \node (B2) at ( 2,0) [nodal, label=below:$2$] {};
        \node (B3) at ( 3,0) [nodal, label=below:$1$] {};
        \node (C1) at ( 0,1) [nodal, label=right:$2$] {};
        \draw (C0)--(A1)--(A2) (C0)--(B1)--(B2)--(B3) (C0)--(C1);
        \end{scope}
        \begin{scope}[xshift=7cm]
        \node (C0) at ( 0,0) [nodal, label=below:$6$] {};
        \node (A1) at (-1,0) [nodal, label=below:$4$] {};
        \node (A2) at (-2,0) [nodal, label=below:$2$] {};
        \node (B1) at ( 1,0) [nodal, label=below:$5$] {};
        \node (B2) at ( 2,0) [nodal, label=below:$4$] {};
        \node (B3) at ( 3,0) [nodal, label=below:$3$] {};
        \node (B4) at ( 4,0) [nodal, label=below:$2$] {};
        \node (C1) at ( 0,1) [nodal, label=right:$3$] {};
        \draw (C0)--(A1)--(A2) (C0)--(B1)--(B2)--(B3)--(B4) (C0)--(C1);
        \end{scope}
        \end{scope}
    \end{tikzpicture}
    \caption{Highest roots of ADE lattices}
    \label{highest.roots}
    \end{figure}
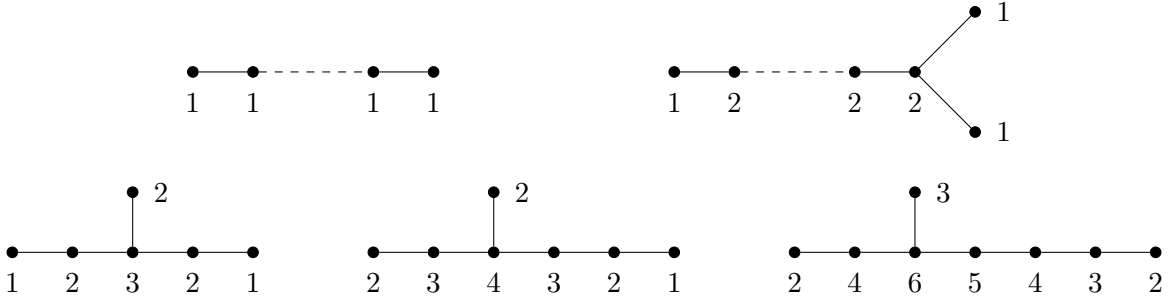

To each ADE lattice $R$, we associate the \emph{extended ADE lattice} $\widetilde R$. For the precise construction, we refer to \cite[§1.5,~pp.~29--30]{Eb13}. Let us only recall that $\widetilde R$ is negative semidefinite, of rank $\rk(R) + 1$, obtained by adjoining a unique root $\tilde r$ to $R$. Moreover, if $r_0\in R$ denotes the highest root, then the only isotropic vectors of $\widetilde R$ are multiples of $e \coloneqq \tilde r + r_0$, and $e$ generates the kernel of the bilinear form associated to $\widetilde R$. 

\begin{lemma} \label{lemma:roots.of.extended.ADE}
\vbox{Let $R$ be an ADE lattice with standard basis $\{r_1,\ldots,r_n\}$, $\widetilde R$ the corresponding extended ADE lattice obtained by adjoining the root $\tilde r$ to $R$. Every root $r\in \widetilde R$ can be written as
$$r=\sum_{i=1}^n{\alpha_i r_i} + \beta \tilde r$$
for integers $\alpha_i,\beta$, either all non-negative or all non-positive.}
\end{lemma}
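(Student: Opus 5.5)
Write the root as $r = v + \beta\tilde r$ with $v=\sum_i \alpha_i r_i \in R$ and $\beta\in\ZZ$. Since $\tilde r = e - r_0$, this gives
$$r = (v - \beta r_0) + \beta e,$$
where $u \coloneqq v-\beta r_0 \in R$. Because $e$ lies in the kernel of the form, $u^2 = r^2 = -2$, so $u$ is a root of $R$. Conversely, every root of $\widetilde R$ has the form $u + k e$ with $u\in\Delta_R$ and $k\in\ZZ$. Indeed, $\widetilde R = R\oplus \ZZ e$ as groups, and $(u+ke)^2 = u^2$.

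The problem therefore reduces to the following. For $u\in\Delta_R$ and $k\in\ZZ$, we must show that the coefficients of $u+ke = u - k r_0 + k\tilde r$ with respect to the basis $\{r_1,\dots,r_n,\tilde r\}$ all have the same sign. Here $\beta = k$, and the $R$-coefficients are those of $u - k r_0$.

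\textbf{Key input.} This is the standard fact about root systems that every root $u\in\Delta_R$ is either positive or negative with respect to the basis $\{r_i\}$. We also use that the highest root dominates every root coefficientwise. Writing $r_0 = \sum_i m_i r_i$ with the coefficients $m_i\ge 1$ from \Cref{highest.roots}, every root $u=\sum_i c_i r_i$ satisfies $|c_i|\le m_i$ for all $i$. Both facts can be cited from \cite[§1.5]{Eb13}. The proof then splits into three cases on $k$.

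\begin{itemize}
\item[(i)] \textbf{$k=0$.} Then $r=u$, and the claim is exactly the positivity/negativity of roots of $R$.
\item[(ii)] \textbf{$k\ge 1$.} Then $\beta=k>0$, and the $R$-coefficients are $c_i - k m_i \le c_i - m_i \le 0$. So all coefficients are non-positive among the $r_i$, while $\beta>0$, which looks like a contradiction. Recall, however, that $\tilde r = e - r_0$, so the basis expansion must be recomputed carefully. Using $e=\tilde r + r_0$, the $r_i$-coefficient of $u+ke$ is $c_i + k m_i$, and the $\tilde r$-coefficient is $k$. In other words, $r=u+k(\tilde r + r_0)$. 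For $k\ge1$ we get $c_i + km_i \ge -m_i + m_i = 0$ and $\beta = k > 0$, so all coefficients are non-negative.
\item[(iii)] \textbf{$k\le -1$.} Symmetrically, $c_i + km_i \le m_i - m_i = 0$ and $\beta<0$, so all coefficients are non-positive.
\end{itemize}

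The main obstacle is justifying the bound $|c_i|\le m_i$, that is, that the highest root dominates all roots. I would cite \cite[§1.5]{Eb13} or Bourbaki for this. Alternatively, it can be checked directly for each ADE type. For $A_n$, every root is $\pm(r_a+\dots+r_b)$. For $D_n$ and $E_n$, the standard argument applies: if $u\ne r_0$ is a positive root, then some $r_i$ satisfies $u.r_i<0$, and $u+r_i$ is again a root. Iterating, $u$ increases to a maximal root, which must be $r_0$ by the uniqueness stated in the text. The rest of the argument is bookkeeping in the basis $\{r_1,\dots,r_n,\tilde r\}$.
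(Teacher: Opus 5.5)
Your proof is correct and follows the paper's argument. Both write $r=u+ke$ with $u\in\Delta_R$ and combine sign-coherence of roots of $R$ with the fact that the highest root $r_0$ dominates every root coefficientwise; the paper first normalizes $u$ to be positive and uses that $u-r_0$ has non-positive coefficients, which is equivalent to your bound $|c_i|\le m_i$.

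Two sign slips should be fixed. First, your reduction line should read $u+ke=u+kr_0+k\tilde r$, as you yourself recompute in case (ii). Second, in your fallback argument for domination the condition should be $u.r_i>0$, not $u.r_i<0$: with the paper's negative definite convention, $(u+r_i)^2=-2$ requires $u.r_i=1$.
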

\begin{proof}
Since $e$ is in the kernel of the quadratic form of $\widetilde R$, we have $r=\alpha e + r'$, with $r'\in R$ a root. By \cite[Lemma~1.3]{Eb13}, $r'$ is a linear combination of $r_1,\ldots,r_n$ with all non-negative (or non-positive) coefficients. Up to changing the sign of $r$, we may assume that all the coefficients of $r'$ are non-negative. If $\alpha\ge 0$, then
$$r = \alpha e + r' = \alpha \tilde{r} + \alpha r_0 + r',$$
where $r_0\in R$ is the highest root, and the claim follows. If instead $\alpha < 0$, then 
$$r = \alpha e + r' = \alpha \tilde{r} + \alpha r_0 + r' = \alpha \tilde{r} + (\alpha +1)r_0 + (r'-r_0),$$
and the coefficients of $r'-r_0$ are all non-positive by \cite[Lemma~1.13]{Eb13}.
\end{proof}

\subsection{Roots, Weyl group and symmetry group} \label{subsection:symmetry}

In this section we will assume that $L$ is a hyperbolic lattice. The positive cone 
$\mathcal{P}_L$ is a connected component of the set $\{x\in L\otimes\RR : (x.x)>0\}$ and 
$\HH_L:=\{x\in\mathcal{P}_L : (x.x)=1\}$ is the corresponding hyperboloid model of hyperbolic 
space of dimension $\mathrm{rk}(L)-1$. Passing to the conformal ball model of $\HH_L$, 
there is a natural identification between elements of the boundary $\partial\HH_L$ and 
isotropic rays in $L\otimes \RR$. Let $\Or^+(L)$ denote the group of isometries of $L$ 
preserving the connected component $\mathcal{P}_L$. 

For any root $r\in L$ we define the \emph{reflection} associated to $r$ as the isometry of $L$ given by
\[\begin{array}{cccc}
\phi_r: &L &\rightarrow &L\\
&v &\mapsto &v+(v.r)r.
\end{array}\]
The \emph{Weyl group} $W(L)$ of $L$ is defined as the subgroup of $\Or^+(L)$ generated by 
reflections associated to roots. On the hyperbolic space $\HH_L$ these reflections can 
be seen as reflections along the subspaces $r^{\perp}\cap\HH_L$, which cut $\HH_L$ into 
connected components called \emph{chambers}. The Weyl group acts simply transitively on 
these chambers. Observe that $W(L)$ is a normal subgroup of $\Or^+(L)$. Moreover, the short 
exact sequence
\[
    0 \to W(L) \to \Or^+(L) \to \Or^+(L)/W(L) \to 0
\]
splits, and subgroups $\mathrm{Aut}(\mathcal{D}_L)\subset \Or^+(L)$ fixing a chamber $\mathcal{D}_L$ correspond to sections of the projection $\Or^+(L)\twoheadrightarrow \Or^+(L)/W(L)$.
Since the Weyl group $W(L)$ acts simply transitively on the set of 
chambers, the subgroups $\mathrm{Aut}(\mathcal{D}_L)$ of $\Or^+(L)$, for different choices 
of chamber $\mathcal{D}_L$, are all conjugate.

We fix a chamber $\mathcal{D}_L$ of $\HH_L$, which by construction is a fundamental domain for the action of $W(L)$ on $\HH_L$. The group $\Aut(\mathcal D_{L}) $ is called 
the \emph{symmetry group} of $L$, and isometries in $\Aut(\mathcal D_{L}) $ are called 
\emph{symmetries} of $L$.
A vector $v\in  L\otimes\RR$ is said to be \emph{fundamental} if 
$v\in\RR^+\cdot\overline{\mathcal{D}}_L$. 
A vector $v\neq0$ is said to be \emph{positive} if $v$ has non-negative inner product with 
all fundamental vectors.

\begin{lemma} \label{lemma:positive.root}
If $0\ne v\in L$ is a vector of norm $v^2\ge -2$, then exactly one of $v$ and $-v$ is positive. More precisely, $v$ is positive if and only if $v$ has positive inner product with \emph{one} fundamental vector.
\end{lemma}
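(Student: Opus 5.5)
We argue with the geometry of the closed chamber $\overline{\mathcal D}_L$ and the positive cone. Let $0\neq v\in L$ with $v^2\ge -2$.

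\emph{Case $v^2\ge 0$.} Then $v$ lies in the closure of $\mathcal P_L$ or of $-\mathcal P_L$. Every fundamental vector lies in $\overline{\mathcal P}_L$. For two nonzero vectors in the closed positive cone of a hyperbolic lattice the inner product is $\ge 0$, with equality only when both are isotropic and proportional. Hence, if $v\in\overline{\mathcal P}_L$, then $v$ is positive. Taking an interior point $h\in\mathcal D_L$ (so $h^2>0$) gives $v.h>0$, so $-v$ is not positive. The case $v\in-\overline{\mathcal P}_L$ is symmetric.

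For the ``more precisely'' part in this case, suppose $v.w>0$ for some fundamental $w$. Then $v\notin -\overline{\mathcal P}_L$, since otherwise $v.w\le 0$. So $v\in\overline{\mathcal P}_L$ and $v$ is positive.

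\emph{Case $v^2=-2$.} Here $v$ is a root, and the hyperplane $v^\perp$ is one of the walls of the decomposition into chambers. By construction $\mathcal D_L$ is a connected component of the complement of all walls $r^\perp\cap\HH_L$. So $\mathcal D_L$ lies entirely in one open half-space $\{x: x.v>0\}$ or $\{x: x.v<0\}$. By continuity, $\overline{\mathcal D}_L$ and its positive multiples then lie in the corresponding closed half-space. Thus exactly one of $\pm v$ has non-negative product with all fundamental vectors.

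Both $\pm v$ cannot be positive. That would force $v.w=0$ for all fundamental $w$. But the fundamental vectors span $L\otimes\RR$, since $\mathcal D_L$ has nonempty interior in $\HH_L$, so this is impossible.

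The ``more precisely'' statement follows. If $v.w>0$ for one fundamental $w$, then $-v$ fails to be positive, so $v$ is positive by the dichotomy. Conversely, if $v$ is positive, take an interior point $h\in\mathcal D_L$. It does not lie on $v^\perp$, so $v.h\neq 0$, hence $v.h>0$.

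\emph{Main obstacle.} The delicate step is the case $-2<v^2<0$. This happens only for $v^2=-1$, and that is excluded because $L$ is even. It is also worth spelling out why a chamber lies on one side of each root hyperplane, even though the family of walls is infinite. The reason is that the walls are locally finite in $\HH_L$, so the chambers are exactly the connected components of the complement of the walls. We would state this local finiteness explicitly, following the standard theory of hyperbolic reflection groups.
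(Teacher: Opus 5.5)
Your proof is correct and follows the same idea as the paper: $v^\perp$ misses the open chamber $\D_L$ (it misses $\HH_L$ entirely when $v^2\ge 0$, and is a wall when $v^2=-2$), so $\D_L$ lies strictly on one side of it and $\overline{\D}_L$ on the corresponding closed side. Your case split, the reverse Cauchy--Schwarz argument on the closed positive cone, and the remark that evenness rules out $v^2=-1$ make explicit what the paper's two-line proof leaves implicit.
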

\begin{proof}
Since $v^2\ge -2$, the hyperplane $v^\perp$ is disjoint from $\D_L$ (if $v^2\ge 0$, $v^\perp$ is even disjoint from $\HH_L$), and therefore $\D_L$ lies on a unique side of the hyperplane $v^\perp$. Hence $v$ is positive if and only if $v$ has positive inner product with one fundamental vector.
\end{proof}

A positive root is said to be \emph{simple} if it is \emph{indecomposable}, i.e it cannot 
be written as the sum of two positive vectors. Observe that, if $r, r'\in L$ are simple roots 
such that $r.r' < 0$, then $r= r'$. Indeed, if $r.r'<0$, then $(r-r')^2 \ge -2$, and therefore either $r-r'$ or $r'-r$ is positive, a contradiction unless $r=r'$.

A \emph{cusp} of $L$ is a primitive, fundamental, isotropic vector $e\in L$. Observe that the orthogonal complement $e^\perp$ is negative semidefinite, with a $1$-dimensional kernel generated by $e$, and the lattice $e^\perp/\langle e \rangle$ is negative definite. Moreover the projection $e^\perp\to e^\perp/\langle e \rangle$ preserves the bilinear forms.

\begin{lemma} \label{lemma:distinct.cusps}
Let $e,f \in L \otimes \mathbb{R}$ be linearly independent fundamental vectors. Then, $e.f >0$. In particular, if $e,f$ are distinct cusps, then $e.f>0$.
\end{lemma}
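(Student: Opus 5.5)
The plan is to use the light-cone inequality together with the positivity of fundamental vectors. The fundamental vectors lie in $\RR^+\cdot\overline{\D}_L$, and $\overline{\D}_L$ is contained in the closure of the positive cone $\overline{\mathcal P}_L$. So $e$ and $f$ are nonzero vectors of the closed positive cone, with $e^2\ge 0$ and $f^2\ge 0$, lying in the same connected component.

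The first step is the standard reverse Cauchy--Schwarz inequality for a form of signature $(1,n)$. If $x,y\in\overline{\mathcal P}_L\setminus\{0\}$, then $x.y\ge 0$. Moreover, $x.y=0$ forces $x$ and $y$ to be proportional isotropic vectors.

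To see this, I fix a unit vector $h\in\mathcal P_L$ and write $x=a h+x'$ and $y=b h+y'$ with $x',y'\in h^\perp$. The complement $h^\perp$ is negative definite, and $a,b>0$ because both vectors lie in the same component as $h$ and are nonzero. From $x^2\ge0$ we get $a\ge \|x'\|$, where $\|\cdot\|$ denotes the norm induced by $-(\cdot.\cdot)$ on $h^\perp$. Similarly $b\ge\|y'\|$. Then
$$x.y = ab - (-x'.y') \ge ab-\|x'\|\|y'\|\ge 0$$
by Cauchy--Schwarz in the Euclidean space $h^\perp$. Equality throughout requires $a=\|x'\|$ and $b=\|y'\|$, so both vectors are isotropic. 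It also requires $x'$ and $y'$ to be positively proportional, and then $x$ and $y$ are proportional.

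The second step applies this with $x=e$ and $y=f$. Since $e$ and $f$ are linearly independent, the equality case is excluded, so $e.f>0$. For the "in particular": distinct cusps are primitive vectors of $L$ lying in $\RR^+\overline{\D}_L$. Two primitive vectors on the same ray coincide, and $e=-f$ is impossible because both lie in the positive cone. Hence distinct cusps are linearly independent, and the first part applies.

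The only point needing care is the equality case of the inequality, which is where linear independence is used; everything else is routine.
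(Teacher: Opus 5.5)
Your proof is correct, but it takes a different route from the paper's.

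The paper splits into cases according to $e^2$:
\begin{itemize}
\item If $e^2>0$, the hyperplane $e^\perp$ is negative definite and hence misses $\overline{\D}_L$. So $e$ and $f$ lie on the same side of it, which gives $e.f>0$.
\item If $e^2=0$, the paper applies the first case to vectors of $\D_L$ to conclude that $e$ is positive (via \Cref{lemma:positive.root}), so $e.f\ge 0$. It then rules out $e.f=0$ using the negative definiteness of $e^\perp/\langle e\rangle$, which would force $f\in\RR e$.
\end{itemize}

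You instead prove the reverse Cauchy--Schwarz inequality on the closed positive cone in one uniform computation. You decompose along a timelike unit vector $h$ and apply Euclidean Cauchy--Schwarz on $h^\perp$. Your equality analysis then replaces the paper's separate treatment of the isotropic case.

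Your version makes clear that the chamber $\D_L$ plays no role: the inequality holds for any two linearly independent nonzero vectors of $\overline{\mathcal P}_L$. It is also fully self-contained. You explicitly justify that distinct cusps are linearly independent (primitivity plus membership in the positive cone), which the paper leaves implicit.

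The paper's version is shorter in context. It reuses the positivity notion already set up, and the negative definiteness of $e^\perp/\langle e\rangle$ that it relies on repeatedly later.
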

\begin{proof}
Assume first $e^2>0$. Since the hyperplane $e^\perp$ is disjoint from $\overline{\D}_L$, the vectors $e$ and $f$ lie on the same side of $e^\perp$. Since $e.e>0$, it follows that $e.f>0$.

Assume instead $e^2=0$. By the above paragraph, $e$ intersects every vector in $\D_L$ positively, so $e$ is positive by \Cref{lemma:positive.root}. Since $f$ is fundamental, it follows that $e.f\ge 0$. If $e.f=0$, then $f\in e^\perp$, and its image in $e^\perp/\langle e \rangle$ has non-negative self-intersection. However $e^\perp/\langle e \rangle$ is negative definite, so $f$ is a multiple of $e$, a contradiction.
\end{proof}

The following lemma is essential when working with fundamental domains:

\begin{lemma}\label{lem:summandFundamental}
	Let $L$ be a hyperbolic lattice, $M$ a negative definite lattice, and view $\HH_L$ as the subspace of $\HH_{L\oplus M}$ of vectors with zero projection onto $M$.
    There is a choice of fundamental domains $\D_L$ for $L$ and $\D_{L\oplus M}$ for $L\oplus M$ such that $\overline{\D}_L = \overline{\D}_{L\oplus M}\cap \HH_L$.
\end{lemma}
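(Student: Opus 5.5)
The plan is to reduce the statement to a comparison of root systems. The roots of $L\oplus M$ are the vectors $v = x + m$ with $x\in L$, $m\in M$ and $x^2 + m^2 = -2$. Since $M$ is negative definite and even, either $m=0$ and $x$ is a root of $L$, or $m^2=-2$ and $x^2 = 0$. The walls $v^\perp$ that meet $\HH_L$ (viewed as the vectors with zero $M$-component) are the $v$ with $v^\perp\cap\HH_L\neq\emptyset$. For $v=x+m$ we have $v^\perp\cap \HH_L = x^\perp\cap \HH_L$. This set is empty when $x^2=0$ and $x\ne0$, because then $x^\perp$ meets the positive cone of $L$ only trivially. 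When $x=0$ it is all of $\HH_L$. Hence the hyperplanes of $\HH_{L\oplus M}$ cutting through $\HH_L$ fall into two kinds. Walls from roots of $L$ intersect $\HH_L$ exactly in the $L$-walls. Walls from roots of $M$ (and from $x+m$ with $x$ isotropic) either contain $\HH_L$ or miss it.

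To build compatible domains, I choose a point $h\in\HH_L$ lying on no $L$-wall (it exists since the walls are locally finite). I let $\D_L$ be the $L$-chamber containing $h$. Next I perturb: pick a small $\epsilon\in M\otimes\RR$ that is generic with respect to the finitely many roots of $M$, and set $h' = h+\epsilon$, renormalized into $\HH_{L\oplus M}$. I would check that $h'$ lies on no wall of $L\oplus M$ and take $\D_{L\oplus M}$ to be its chamber. Roots $x+m$ with $x^2=0$, $x\neq 0$ satisfy $(x+m).h' = x.h + m.\epsilon$. Here $x.h>0$ is bounded below on the relevant discrete set (positive isotropic vectors of $L$ pair with $h$ in a discrete subset of $\RR_{>0}$), while $|m.\epsilon|$ is small because $m$ ranges over the finite set of roots of $M$. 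So these roots have the same sign on $h'$ as on $h$, and their sign is constant on a neighbourhood of $h$ in $\HH_L$.

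The main step is showing $\overline{\D}_{L\oplus M}\cap\HH_L=\overline{\D}_L$. For "$\subseteq$": a point $y\in\HH_L$ in $\overline{\D}_{L\oplus M}$ satisfies $y.r\ge0$ for every root $r$ of $L$ positive on $h'$, equivalently positive on $h$. These are exactly the inequalities defining $\overline{\D}_L$. For "$\supseteq$": take $y\in\overline{\D}_L$ and verify $y.v\ge0$ for every root $v$ of $L\oplus M$ with $v.h'>0$. If $v$ is a root of $L$, this follows as before. If $v=m\in M$, then $y.v=0$. If $v=x+m$ with $x$ isotropic and nonzero, then $x.h>0$ forces $x$ into the closure of the positive cone, so $y.x\ge 0$ because $y\in\HH_L$ lies in the positive cone.

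The obstacle I expect is the uniformity of the perturbation. I need a single $\epsilon$ that works for infinitely many isotropic $x$. The bound relies on $x.h$ being bounded away from $0$ over integral isotropic $x$, which holds because $h$ is timelike and $L$ is discrete: the set $\{x\in L : 0< x.h<c\}$ is finite, since $x^2=0$ with $x.h$ bounded confines $x$ to a compact region. I would spell this out with the standard compactness argument.
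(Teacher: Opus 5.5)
Your strategy (a generic point $h\in\HH_L$, a nearby chamber of $L\oplus M$, then comparing the defining inequalities) is workable. However, the classification of roots of $L\oplus M$ on which it rests is false. From $x^2+m^2=-2$ with $M$ even and negative definite you only get $m^2\in\{0,-2,-4,\dots\}$, hence $x^2\in\{-2,0,2,4,\dots\}$. You have dropped the roots $x+m$ with $x^2=2k\ge 2$ and $m^2=-2k-2$; for instance $e+f+r_1+r_2\in U\oplus A_1\oplus A_1$, with $e,f$ the standard isotropic basis of $U$.

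Their walls do miss $\HH_L$, since $x^\perp\subseteq L_\RR$ is negative definite, so your description of the walls meeting $\HH_L$ survives. The perturbation step does not. For these roots $m$ ranges over infinitely many vectors of $M$ of unbounded norm, so you cannot argue that $|m.\epsilon|$ is small because $m$ runs through the finite set of roots of $M$. As written, you have therefore shown neither that $h'$ avoids these walls nor that $v.h'$ and $x.h$ have the same sign. The ``$\supseteq$'' step needs the latter for such $v$.

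The repair is a norm comparison instead of a finiteness argument. With $h^2=1$, reverse Cauchy--Schwarz gives $|x.h|\ge\sqrt{2k}$, and Cauchy--Schwarz in the negative definite space $M_\RR$ gives $|m.\epsilon|\le\sqrt{2k+2}\,\sqrt{-\epsilon^2}$. So the signs agree for every $k\ge 1$ once $-\epsilon^2<1/2$. After that, the case $x^2>0$ goes through in ``$\supseteq$'' exactly like the isotropic one. Alternatively, you can invoke local finiteness of the wall arrangement in $\HH_{L\oplus M}$ near $h$.

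The paper avoids the issue altogether by not perturbing: it takes $\D_{L\oplus M}$ to be any chamber whose closure contains $v=(h,0)$. For a positive root $r=z_1+z_2$ with $z_1\neq 0$ one has $v.r=h.z_1\neq 0$, whether $z_1^2$ is $-2$, $0$ or positive, hence $h.z_1>0$. Since $z_1^2\ge -2$, \Cref{lemma:positive.root} makes $z_1$ positive in $L$, so $w.r=w.z_1\ge 0$ for all $w\in\overline{\D}_L$. Working with the closure of a chamber rather than an interior point is what removes any need for uniform estimates.
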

\begin{proof}
Choose a fundamental domain $\D_L$ for $L$ and any $v_L\in \D_L$. Denote $v=(v_L,0)\in L\oplus M$, and let $\D_{L\oplus M}$ be a fundamental domain for $L\oplus M$ such that $v\in \overline{\D}_{L\oplus M}$. The set of positive roots of $L$ is a subset of the set of positive roots of $L\oplus M$: indeed, if $r\in L$ is a positive root, then $v.r = v_L.r>0$ by assumption, so $r$ is positive in $L\oplus M$ by \Cref{lemma:positive.root}. It follows that $\overline{\D}_{L\oplus M}\cap \HH_L \subseteq \overline{\D}_L$. For the other direction, let $w\in\overline{\D}_L$. In order to show that $w\in \overline{\D}_{L\oplus M}$, it suffices to show that $w.r\ge 0$ for every positive root $r\in L\oplus M$. Write $r=z_1+z_2$, with $z_1\in L$ and $z_2\in M$. If $z_1=0$, then $w.r=0$, so without loss of generality assume $z_1\ne 0$. Moreover $M$ is negative definite, so $z_1^2\ge -2$, and thus either $z_1$ or $-z_1$ is positive in $L$. Since $v.r = v.z_1 \ne 0$ by assumption, by positivity of $r$ we deduce $v.r = v.z_1 > 0$, i.e. $z_1$ is positive in $L$. Hence $w. r = w.z_1 \ge 0$, as desired.
\end{proof}

\subsection{Isometries and Salem polynomials}

Let $L$ be a hyperbolic lattice, with fundamental domain $\D_L$. As explained in \Cref{subsection:symmetry}, the symmetry group $\Aut(\D_L)$ can be identified with the group of isometries of $L$ preserving the fundamental domain $\D_L$. Therefore $\Aut(\D_L)$ is a subgroup of the group of isometries $\Or(\HH_L)$.

Every isometry $f$ of the hyperbolic space $\HH_L$ has a characteristic polynomial $P(x)\in \ZZ[x]$ with at most one root outside the unit disk (cf. \cite[Discussion before D\'efinition 1.2]{cantat.dynamique}). In particular we can write $P(x)=S(x)C(x)$, where $S(x)$ is either trivial, a quadratic polynomial, or a \emph{Salem polynomial}, and $C(x)$ is a product of cyclotomic polynomials. We call $S(x)$ the \emph{Salem factor} of $f$, and its degree is the \emph{Salem degree} of $f$. Observe that the Salem degree of $f$ is zero if and only if all roots of $P$ are roots of unity, in which case we say that $f$ has \emph{zero entropy}. Recall that Salem polynomials always have even degree. Hence, we say that $f$ has \emph{maximal Salem degree} if its Salem degree is $\rk(L)$ (if $\rk(L)$ is even) or $\rk(L)-1$ (if $\rk(L)$ is odd).

\subsection{Structure theory of hyperbolic lattices}

We conclude the section with a structure result for maximal overlattices of hyperbolic lattices. The following lemma is well-known to experts. For lack of a precise reference, we give a proof.

\begin{lemma}\label{lem:lengthbound}
    If $L$ is a lattice of length $\ell(A_L)\ge 4$, then it admits a non-trivial overlattice.
\end{lemma}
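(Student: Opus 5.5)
Since overlattices of $L$ correspond to isotropic subgroups of $A_L$ (\cite[Prop.~1.4.1]{Ni80}), it suffices to find a nonzero element $x\in A_L$ with $q(x)=0\in\QQ/2\ZZ$. The subgroup generated by such an $x$ is then isotropic: $q(kx)=k^2q(x)=0$, and the bilinear form satisfies $b(x,x)=q(x)\bmod \ZZ=0$. The plan is to reduce to a single prime $p$ and then to a vector space over $\mathbb{F}_p$.

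Since $\ell(A_L)=\max_p\ell_p(A_L)$, there is a prime $p$ with $\ell_p(A_L)\ge 4$. Let $A_p$ be the $p$-Sylow subgroup of $A_L$. The discriminant form splits orthogonally over the Sylow subgroups, so it suffices to find the isotropic element inside $A_p$. Consider the $p$-torsion subgroup $V=A_p[p]$, which is an $\mathbb{F}_p$-vector space of dimension $\ell_p(A_L)\ge 4$. There are two cases.

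\emph{Case $p$ odd.} Here $q$ restricted to $V$ takes values in $\frac{2}{p}\ZZ/2\ZZ\cong\mathbb{F}_p$. Write $q(x)=\frac{2}{p}Q(x)$, where $Q$ is a quadratic form on $V$ with values in $\mathbb{F}_p$; it may be degenerate. By Chevalley--Warning, every quadratic form in at least $3$ variables over a finite field has a nontrivial zero, so dimension $\ge 4$ is more than enough. A nonzero $x\in V$ with $Q(x)=0$ satisfies $q(x)=0$.

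\emph{Case $p=2$.} This is the main obstacle, because $q$ on $V=A_2[2]$ takes values in $\frac12\ZZ/2\ZZ\cong\ZZ/4$ rather than in a field, and the bilinear form is $b(x,y)\in\frac12\ZZ/\ZZ$. For $x\in V$ we have $2q(x)=4b(x,x)\equiv 0 \pmod 2$, so $q(x)\in\{0,\tfrac12,1,\tfrac32\}$. The approach is to decompose $A_2$ using the standard normal form of $2$-adic discriminant forms: an orthogonal sum of cyclic pieces $\langle \pm\frac{a}{2^k}\rangle$ ($a$ odd) and the rank-two pieces $u_k,v_k$. Since $\ell_2\ge4$, there are at least four summands counted with rank.
\begin{enumerate}
\item If any $u_k$ or $v_k$ summand occurs: $u_k$ contains an isotropic generator. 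For $v_k$, the sum $v_k\oplus$(any other piece) produces an isotropic element, since $v_1\oplus v_1\cong u_1\oplus u_1$, and similar relations hold for the mixes with cyclic pieces.
\item If only cyclic pieces occur: there are at least four of them, $\langle \frac{a_i}{2^{k_i}}\rangle$ with generators $g_i$. Take $x=\sum \epsilon_i 2^{k_i-1}g_i$, with $\epsilon_i\in\{0,1\}$, in the $2$-torsion. Each nonzero term contributes $q=a_i2^{k_i-2}$ modulo $2$. If $k_i=1$ this is $a_i/2$; if $k_i=2$ it is $a_i$, i.e.\ $1$ mod $2$; if $k_i\ge3$ it is $0$, and then $x=2^{k_i-1}g_i$ is already isotropic. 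So we may assume every contribution lies in $\{\pm\frac12,1\}$ modulo $2$. Among four values in the cyclic group $\tfrac12\ZZ/2\ZZ\cong\ZZ/4$, a pigeonhole\slash subset-sum argument yields a nonempty subset summing to $0$. By the Davenport constant of $\ZZ/4$, any $4$ elements have a nonempty zero-sum subsequence.
\end{enumerate}

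In case~(1), the same Davenport argument can be applied uniformly. Restrict instead to the $2$-torsion vectors that are sums of orthogonal pieces, and use that $u_k$ and $v_k$ themselves contain elements whose $q$-values are $0$, or $1$ respectively. It is cleanest to note that each orthogonal summand contributes at least one nonzero $2$-torsion element, and that $q$ is additive on orthogonal sums. With at least four summands, Davenport's bound $D(\ZZ/4)=4$ then gives an isotropic nonzero $x$ in all cases. (One must check that the summands number at least $4$, or handle the cases with two $u/v$ summands, where $u_k$ already gives isotropy and $v_k\oplus v_k$ does so as well.)
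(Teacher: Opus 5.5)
Your odd-$p$ case coincides with the paper's argument. Your case (2) for $p=2$ (only cyclic summands, with Davenport's bound $D(\ZZ/4)=4$ applied to pairwise orthogonal $2$-torsion elements) is correct. The gap is in case (1).

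\textbf{The claim about $v_k$ is false.} You assert that $v_k\oplus(\text{any other piece})$ contains a nonzero isotropic element. But all three nonzero elements of $v_1$ have $q=1$, so $v_1\oplus\langle\pm\frac12\rangle$ is anisotropic; indeed $\langle\frac12\rangle^{\oplus3}\cong v_1\oplus\langle\frac32\rangle$.

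\textbf{The uniform fallback misses a case.} It uses one $2$-torsion element per orthogonal summand, so it needs at least four summands. Your parenthetical covers the configurations with two rank-two summands. It does not cover $A_2\cong v_1\oplus\langle a_1/2^{k_1}\rangle\oplus\langle a_2/2^{k_2}\rangle$, which has $\ell_2=4$ but only three summands. Three elements of $\ZZ/4$ need not have a nonempty zero-sum subsequence (e.g.\ $1,1,1$), so Davenport does not apply.

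\textbf{How to close it.} The case does work out, but it needs an extra argument. Take $e\in v_1$ nonzero, so $q(e)=1$.
\begin{enumerate}
\item If some $k_i\ge 3$, the element $2^{k_i-1}g_i$ is already isotropic.
\item If some $k_i=2$, then $q(2g_i)=1$ and $e+2g_i$ is isotropic.
\item If $k_1=k_2=1$, then $q(g_1+g_2)=(a_1+a_2)/2\in\{0,1\}$, so $g_1+g_2$ or $e+g_1+g_2$ is isotropic.
\end{enumerate}
If the rank-two piece is $u_k$, or $v_k$ with $k\ge 2$, it already contains a nonzero isotropic $2$-torsion element. Note that for $k\ge2$ the $2$-torsion of $v_k$ has $q$-value $0$, not $1$ as you wrote.

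\textbf{Comparison with the paper.} With that repair your route is valid, but heavier than the paper's. The paper never uses the normal form of $2$-adic discriminant forms. It works directly on the $\mathbb{F}_2$-vector space $A_2[2]$, of dimension $\ge 4$:
\begin{enumerate}
\item Chevalley--Warning gives $v_1\ne 0$ with $q(v_1)\in\{0,1\}$.
\item If $q(v_1)=1$, it picks a further $v_2$ orthogonal to $v_1$, inside $v_1^\perp$ of dimension $\ge 3$, with $q(v_2)\in\{0,1\}$.
\item Then $v_2$ or $v_1+v_2$ is isotropic.
\end{enumerate}
This is exactly the mechanism your missing case needs: an element of value $1$ combined with an orthogonal element of value $0$ or $1$. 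The paper carries it out basis-free, so no bookkeeping of the number of orthogonal summands, which is where your case analysis went wrong, is required.
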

\begin{proof}
By assumption there is a prime $p$ such that the $p$-length $\ell_p(A_L)$ is at least $4$.
Since finding a non-trivial overlattice amounts to finding a non-zero isotropic vector in $A_L$, we may restrict to the $p$-torsion subgroup of $A_L$ and assume that $A_L$ is a $p$-elementary group.
Every element $\bar{v}\in A_L$ can be written as $\bar{v}=\frac{v}{p}$, where $v\in L$ is a vector of \emph{divisibility} multiple of 
$p$ (i.e. $p$ divides $u.v$ for every $u\in L$). In particular, if $\bar{v},\bar{w}\in A_L$, then $\bar{v}.\bar{w}\in \frac{1}{p}\ZZ$.

Assume $p>2$. Up to multiplying by $p$, the equation $\bar{v}^2 = 0\in \QQ/2\ZZ$ can be rewritten as $Q(v)\equiv 0 \pmod{p}$, where $Q$ is a non-degenerate quadratic form over $\ZZ$. Since the $p$-length of $A_L$ is at least $3$, the quadratic form $Q$ has at least $3$ variables, and therefore the equation $Q(v)\equiv 0 \pmod{p}$ admits a non-trivial solution by the Chevalley--Warning theorem.

Assume now $p=2$. As above, the equation $\bar{v}^2 = 0\in \QQ/2\ZZ$ can be rewritten as 
$Q(v)\equiv 0 \pmod{4}$, where $Q$ is a non-degenerate quadratic form over $\ZZ$ in at least $4$ variables. By the Chevalley--Warning theorem, there 
exists $v_1$ such that $Q(v_1) \equiv 0 \pmod{2}$. If $4\mid Q(v_1)$, this yields a 
non-trivial isotropic vector in $A_L$. If instead $Q(v_1) \equiv 2 \pmod{4}$, the orthogonal complement $v_1^\perp$ has dimension $\ge 3$, and therefore there exists a vector $v_2$, orthogonal to $v_1$, such that $Q(v_2) \equiv 0 \pmod{2}$. Either 
$4\mid Q(v_2)$ or $4\mid Q(v_1+v_2)$, giving in any case a non-trivial isotropic vector in $A_L$. 
\end{proof}

\begin{remark}
The above bound is optimal. Indeed, the root lattice $A_1^{\oplus 3}$ admits no non-trivial even overlattice. Nevertheless, the same proof shows that $L$ admits a non-trivial overlattice as soon as $\ell_p(A_L)\ge 3$ for an odd prime $p$.
\end{remark}

\begin{proposition}\label{prop:structure}
	If $L$ is a hyperbolic lattice of rank at least $6$, then there exist an overlattice 
    $L'$ of $L$, an integer $n$ and a negative-definite lattice $M$ such that
    $$L'\cong U\oplus E_{8}^{n}\oplus M,$$
    with $\rk M\le 11$ and such that the root part of $M$ does not contain any $E_{8}$ or an ADE lattice of rank at least $9$.
\end{proposition}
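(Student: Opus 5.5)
We plan to take $L'$ to be a \emph{maximal} overlattice of $L$; it exists because overlattices of $L$ correspond to isotropic subgroups of the finite group $A_L$. By \Cref{lem:lengthbound}, maximality forces $\ell(A_{L'})\le 3$. Since $\rk L'\ge 6\ge 3+\ell(A_{L'})$, \cite[Corollary~1.13.5]{Ni80} lets us split off a hyperbolic plane, so $L'\cong U\oplus N$ with $N$ negative definite, $\rk N=\rk L-2$, and $A_N\cong A_{L'}$. Also $N$ is a maximal lattice: an overlattice of $N$ would give one of $L'$.

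Next we peel off copies of $E_8$. Among all decompositions $L'\cong U\oplus E_8^{n}\oplus M$ with $M$ negative definite (one exists with $n=0$), we choose one with $n$ maximal. Then $M$ is again maximal and $\ell(A_M)=\ell(A_{L'})\le 3$. We claim this $M$ has the required properties.

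\emph{Root part.} Suppose the root part of $M$ contains an ADE summand $R$ that is either $E_8$ or of rank at least $9$. In the first case $E_8$ is unimodular, so it is a direct summand of $M$ by the remark in the notation section, and $n$ was not maximal. In the second case \Cref{prop: big roots contain UE8} gives an embedding $U\oplus E_8\hookrightarrow U\oplus R\subseteq U\oplus M$. Being unimodular, $U\oplus E_8$ is a direct summand: $U\oplus M\cong U\oplus E_8\oplus M'$, with $M'$ negative definite. This again contradicts the maximality of $n$.

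\emph{Rank bound.} This is the main step. Suppose $\rk M\ge 12$. Since $\ell(A_M)\le 3$, we have
$$\rk(U\oplus M)\ge 14\ge 9+\ell(A_M)+2.$$
Hence \cite[Corollary~1.13.5]{Ni80} (applied to the indefinite lattice $U\oplus M$) splits off $E_8$: $U\oplus M\cong E_8\oplus H$ with $H$ hyperbolic, $\rk H=\rk M-6\ge 6\ge 3+\ell(A_H)$. Applying the same corollary again gives $H\cong U\oplus M'$. Therefore $U\oplus M\cong U\oplus E_8\oplus M'$, again contradicting maximality of $n$.

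So $\rk M\le 11$. The point to check carefully is the precise hypothesis of Nikulin's corollary, namely $\rk\ge \ell+3$ for splitting off $U$ and $\rk\ge \ell+9$ for $U\oplus E_8$ in the indefinite even case. With $\ell\le 3$, the threshold $\rk M\ge 12$, i.e.\ $\rk(U\oplus M)\ge 14$, suffices for both, which explains the bound $11$.
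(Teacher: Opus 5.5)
Your proof is correct and follows the paper's argument essentially step for step. You take a maximal overlattice with $\ell(A_{L'})\le 3$ by \Cref{lem:lengthbound}, choose $n$ maximal, apply \cite[Corollary~1.13.5]{Ni80} twice to force $\rk M\le 11$, and invoke \Cref{prop: big roots contain UE8} for the root part. Your separate treatment of an $E_8$ summand of $M_{\mathrm{root}}$ via unimodularity, a case \Cref{prop: big roots contain UE8} does not cover, makes explicit a step the paper leaves implicit.
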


\begin{proof}
Let $L'$ be a maximal overlattice of $L$. Then, $\ell(A_{L'}) \leq 3$ by \Cref{lem:lengthbound}.
As
$\rk(L')\ge 6$, it follows from \cite[Corollary~1.13.5]{Ni80} that $L'$ contains a copy of $U$. Let $n$ be maximal integer $\ge 0$ such that $U\oplus E_8^n$ embeds into $L'$.

Since $U\oplus E_8^n$ is unimodular, we obtain a decomposition $L'=U\oplus E_8^n\oplus M$ for some $M$, and 
    $\ell(A_M)=\ell(A_{L'})\le 3$. By 
    \cite[Corollary~1.13.5]{Ni80}, if $\rk M\ge 12$, then $U\oplus M$ has a direct summand 
    $E_8$ and its complement has rank at least $6$ (and length at most $3$), so by 
    the same Corollary, $U\oplus M\cong U\oplus E_8\oplus M'$ for some $M'$, a contradiction to the maximality of $n$. Hence $\rk M\le 11$. Since $n$ is chosen maximal, the statement about the root part follows immediately from \Cref{prop: big roots contain UE8}.
\end{proof}

\section{The exceptional lattice}

In this section we review the notion of exceptional lattice, introduced by Nikulin in the context of K3 surfaces and, more in general, of arbitrary hyperbolic lattices.

Let $L$ be a hyperbolic lattice, with symmetry group $\Aut(\D_L)$.
Following Nikulin \cite[§4]{nikulin.elliptic.fibrations}, we define the \emph{exceptional lattice} 
$E(L)$ of $L$ to be the sublattice of vectors $v\in L$ with finite orbit under 
$\mathrm{Aut}(\mathcal{D}_L)$. More generally, if $G\subseteq \Aut(\mathcal{D}_L)$ is a subgroup, we denote $E_G(L)$ the sublattice of vectors $v\in L$ with finite orbit under $G$. 

We observe that the exceptional lattice behaves well with respect to overlattices:

\begin{lemma}\label{lem:overlatticeChambers}
Let $L$ be a hyperbolic lattice, and $L'$ an overlattice of $L$. There exist chambers $\mathcal{D}_L$ of $L$ and $\mathcal{D}_{L'}$ of $L'$ such that $\mathcal{D}_{L'} \subseteq \mathcal{D}_L$. In particular, $E(L)\subseteq E(L')$.
\end{lemma}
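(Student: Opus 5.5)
We first compare chambers. Since $L\subseteq L'$ have the same rank, we identify $L\otimes\RR = L'\otimes\RR$, so that $\HH_L=\HH_{L'}$. Every root of $L$ is a vector of norm $-2$ in $L'$, hence a root of $L'$. Therefore the hyperplane arrangement $\{r^\perp : r\in L,\ r^2=-2\}$ is contained in the arrangement $\{r^\perp : r\in L',\ r^2=-2\}$. Each chamber of $L'$ is a connected component of the complement of the larger arrangement. It is therefore contained in a unique connected component of the complement of the smaller arrangement, that is, in a chamber of $L$. Fix any chamber $\D_{L'}$ and let $\D_L$ be the chamber of $L$ containing it. This proves the first assertion.

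For the inclusion $E(L)\subseteq E(L')$, we compare the symmetry groups $\Aut(\D_L)$ and $\Aut(\D_{L'})$ inside $\Or(L\otimes\QQ)$. The natural approach is to show that $\Aut(\D_{L'})\cap \Or^+(L)$ has finite index in $\Aut(\D_{L'})$, and that it is contained in $\Aut(\D_L)$.

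\emph{Finite index.} Since $L\subseteq L'\subseteq L'^\vee\subseteq L^\vee$, any $g\in \Or^+(L')$ preserves $L'^\vee$. The group $\Or(L')$ acts on the finite set of lattices sandwiched between $dL'^\vee$ and $L'$, where $d=[L':L]\cdot \mathrm{disc}(L')$ is chosen large enough. Hence the stabilizer $\Or^+(L')\cap\Or^+(L)$ has finite index in $\Or^+(L')$. A cleaner way to phrase this is that it contains the kernel of $\Or(L')\to \Aut(L'/mL')$ for $m=[L':L]$. Indeed, any such $g$ acts trivially on $L'/mL'\supseteq L/mL'$, and $mL'\subseteq L$, so $g$ preserves $L$. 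Intersecting with $\Aut(\D_{L'})$ gives a finite-index subgroup $G$.

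\emph{Containment.} Let $g\in G$. Then $g$ preserves $L$ and maps $\D_{L'}$ to itself. Since $g$ permutes the roots of $L$, it maps chambers of $L$ to chambers of $L$. Now $g(\D_L)$ is a chamber of $L$ containing $g(\D_{L'})=\D_{L'}$, so $g(\D_L)=\D_L$. Thus $G\subseteq \Aut(\D_L)$.

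Now let $v\in L$ have finite orbit under $\Aut(\D_L)$. Then $v$ has finite orbit under $G$. Since $G$ has finite index in $\Aut(\D_{L'})$, the orbit of $v$ under $\Aut(\D_{L'})$ is a finite union of $G$-orbits of the finitely many translates $h v$, with $h$ running over coset representatives. Each $hv$ has finite $G$-orbit, because $G$ contains the normal finite-index subgroup $\bigcap_h hGh^{-1}$. Hence $v\in E(L')$. The generated sublattice $E(L)$ is then contained in $E(L')$.

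The main obstacle is the finite-index step: one needs that a finite-index subgroup of $\Aut(\D_{L'})$ preserves $L$. The congruence-kernel argument above handles this, and it should be written carefully. Also note that $E(L)$ is a priori defined via vectors of $L$, while $E(L')$ is defined via vectors of $L'$; the inclusion $L\subseteq L'$ makes this comparison harmless.
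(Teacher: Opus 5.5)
Your proposal is correct and follows the paper's proof. Both arguments rest on the same two steps: the root-hyperplane arrangement of $L$ is contained in that of $L'$, and a finite-index subgroup $G\subseteq\Aut(\D_{L'})$ preserving $L$ embeds into $\Aut(\D_L)$, so vectors with finite $\Aut(\D_L)$-orbit have finite $\Aut(\D_{L'})$-orbit. You also fill in two points the paper only asserts: why $G$ has finite index (via the congruence kernel of $\Or(L')\to\Aut(L'/mL')$ with $m=[L':L]$), and why $G$ fixes $\D_L$.
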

\begin{proof}
The first statement is clear, since $L$ and $L'$ have the same rank and any root of $L$ is also a root of $L'$. 
If $G\subseteq \Aut(\mathcal{D}_{L'})$ is the finite index subgroup of isometries preserving $L\subseteq L'$, by the first part of the statement we have an injective homomorphism $G\hookrightarrow \Aut(\mathcal{D}_L)$. 
Since $G\subseteq \Aut(\mathcal{D}_{L'})$ is of finite index, we have $E_G(L')=E(L')$, and clearly $E(L)\subseteq E_G(L) \subseteq E_G(L')$, concluding the proof.
\end{proof}

Let $\mathcal E(L)$ denote the set of cusps in $L$ and let $\mathcal E_{\infty}(L)$ denote the subset of cusps with infinite stabilizer in $\mathrm{Aut}(\mathcal{D}_L)$. For a cusp $e\in \mathcal{E}(L)$, we denote by $(e^\perp)^{(2)}$ the sublattice of $L$ generated by $e$ and the roots orthogonal to $e$, and by $(e^{\perp})^{(2)}_{\mathrm{pr}}$ the saturation of $(e^\perp)^{(2)}$ in $L$.

\begin{remark} \label{rmk:take.care.of.degenerate.lattice}
Since the lattice $e^\perp$ is degenerate with kernel $e$, it decomposes as in \cite[Lemma~4.1]{brandhorst.mezzedimi.borcherds} into the direct sum $\langle e \rangle \oplus W$, where the lattice $W\coloneqq e^\perp/\langle e \rangle$ is negative definite. Then, we can view $W$ as a primitive sublattice of $L$ by composing $W \hookrightarrow e^{\perp} \hookrightarrow L$.

If $R\subseteq W$ is any primitive sublattice, then $\langle e \rangle \oplus R$ is primitive in $e^\perp$, and therefore in $L$. Indeed a lattice embedding is primitive if and only if the cokernel is torsion-free, and
$$e^\perp/(\langle e \rangle \oplus R) = (\langle e \rangle \oplus W)/(\langle e \rangle \oplus R) = W/R.$$
Therefore, if $R\coloneqq W_{\mathrm{root},\rm pr}$ is the saturation (in $W$) of the lattice spanned by the roots of $W$, it holds that $(e^{\perp})^{(2)}_{\mathrm{pr}} = \langle e \rangle \oplus R$.
\end{remark}

Recall that, by a generalization of the Shioda--Tate formula (see e.g. \cite[Corollary~1.5.4]{nikulin.rank.greater.than.five}, and \cite[Proposition~3.2]{brandhorst.mezzedimi.borcherds} for a complete proof), a cusp $e\in \mathcal{E}(L)$ has infinite stabilizer if and only if $\rk (e^\perp)^{(2)}<\rk e^\perp$, or equivalently if and only if $e^\perp/e$ is not an overlattice of a root lattice.

The following explicit description of the exceptional lattice was stated by Nikulin in \cite[Theorem~4.2]{nikulin.elliptic.fibrations}, though he only proved it for Picard lattices of K3 surfaces (see \cite[Theorem~4.1]{nikulin.elliptic.fibrations}). For the sake of completeness, we include a proof.

\begin{proposition}\label{prop:exceptionalIntersection}
Let $L$ be a lattice such that $\mathcal E_\infty(L)$ is non-empty. 
Then the exceptional lattice satisfies
\begin{align*}
    E(L)=\bigcap _{e\in \mathcal E_{\infty}(L)} (e^{\perp})^{(2)}_{\mathrm{pr}}.
\end{align*}
\end{proposition}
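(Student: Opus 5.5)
We prove the two inclusions separately. Throughout, write $G=\Aut(\D_L)$ and $F:=\bigcap_{e\in\mathcal E_\infty(L)}(e^\perp)^{(2)}_{\mathrm{pr}}$. The two basic facts we use are:
\begin{itemize}
\item $G$ permutes $\mathcal E_\infty(L)$, since it preserves $\overline{\D}_L$, primitivity and isotropy, and conjugates stabilizers.
\item For $e\in\mathcal E_\infty(L)$, the stabilizer $G_e$ acts on $e^\perp/\langle e\rangle$ through a finite group, because this lattice is negative definite. A finite-index subgroup $H_e\subseteq G_e$ therefore acts on $e^\perp$ by ``translations'' $v\mapsto v+\lambda(v)e$.
\end{itemize}

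\textbf{Inclusion $E(L)\subseteq F$.} Fix $e\in\mathcal E_\infty(L)$. By the generalized Shioda--Tate statement quoted above, $\rk(e^\perp)^{(2)}<\rk e^\perp$, so $H_e$ is infinite. Following \cite[Proposition~3.2]{brandhorst.mezzedimi.borcherds}, we would show that $H_e$ acts on $L$ by Eichler-type transvections $\psi_{e,m}$. Here $m$ ranges over a finite-index sublattice of the orthogonal complement of the roots in $W=e^\perp/\langle e\rangle$; this complement is nonzero precisely because $e$ has infinite stabilizer. The formula is
\[
\psi_{e,m}(x)=x+(x.e)m-\Bigl((x.m)+\tfrac12 m^2(x.e)\Bigr)e .
\]
Now let $v\in E(L)$, so $v$ has finite $G$-orbit and hence finite $H_e$-orbit. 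First, $v.e=0$: otherwise, for $m$ ranging over infinitely many multiples, the $W$-component $(v.e)m$ takes infinitely many values. Second, the orbit contains $v-(v.m)e$, so $v.m=0$ for all such $m$. Hence $v\in e^\perp$ and its image in $W$ is orthogonal to the non-root part. The image therefore lies in $W_{\mathrm{root},\mathrm{pr}}\otimes\QQ\cap W$, which is $W_{\mathrm{root},\mathrm{pr}}$. By \Cref{rmk:take.care.of.degenerate.lattice}, $v\in\langle e\rangle\oplus W_{\mathrm{root},\mathrm{pr}}=(e^\perp)^{(2)}_{\mathrm{pr}}$. This holds for every $e$, so $v\in F$. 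The main work is to realize enough of $H_e$ as translations with nondegenerate translation lattice on the orthogonal complement of the root part. I would import this from \cite[Proposition~3.2]{brandhorst.mezzedimi.borcherds}, checking that their positivity conventions match ours.

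\textbf{Inclusion $F\subseteq E(L)$.} Since $G$ permutes $\mathcal E_\infty(L)$, the lattice $F$ is $G$-invariant. We claim it is negative semidefinite. For $e\ne f$ in $\mathcal E_\infty(L)$, \Cref{lemma:distinct.cusps} gives $e.f>0$. If there is only one cusp $e$, then $F\subseteq e^\perp$ and we are done. If there are two cusps, then $F\subseteq e^\perp\cap f^\perp$, which is negative definite because $e,f$ span a hyperbolic plane. The bad case is a single infinite-stabilizer cusp $e$ with $e\in F$. Here $e$ is $G$-fixed, so its orbit is finite. $F$ is then contained in $\langle e\rangle\oplus R$ with $R$ negative definite.

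In either case, $G$ acts on the negative definite quotient ($F$ itself, or $F/(F\cap\langle e\rangle)$) through a finite group, since that lattice has finite isometry group. For each $v\in F$ we then show the orbit is finite:
\begin{itemize}
\item In the definite case, $Gv\subseteq\{w\in F: w^2=v^2\}$, which is finite.
\item In the semidefinite case, $e$ is fixed and $gv-v\in\QQ e$. Since $v$ lies in $(e^\perp)^{(2)}_{\mathrm{pr}}$, the vector $v$ is a rational combination of $e$ and roots orthogonal to $e$. I would argue that $g$ permutes the finitely many simple roots orthogonal to $e$ in $\overline{\D}_L$, namely the components of the reducible fibres. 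Then $g$ permutes a finite set spanning $(e^\perp)^{(2)}$, hence has finite order on $F$.
\end{itemize}
This yields $F\subseteq E(L)$. The delicate point is to check that the simple roots orthogonal to $e$ are finitely many and span $(e^\perp)^{(2)}$ together with $e$. This follows from \Cref{lemma:roots.of.extended.ADE}, applied to the extended Dynkin diagrams of the root part of $W$.
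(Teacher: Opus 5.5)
Your proposal is correct and follows essentially the same route as the paper. For $F\subseteq E(L)$ you use the same split into a single infinite-stabilizer cusp versus at least two cusps, with $e.f>0$ making $F$ negative definite in the latter case; for $E(L)\subseteq F$ you use the Eichler--Siegel transvections imported from \cite[Proposition~3.2]{brandhorst.mezzedimi.borcherds}. The differences are minor: you get $v.e=0$ directly from the transvections rather than from the negative definiteness of $\langle e,v\rangle^\perp$, you test $v$ against $\psi_{e,km}$ instead of showing that each $0\neq y\in R^\perp_W$ has infinite orbit, and in the one-cusp case you make the finiteness explicit via the finitely many simple roots in $e^\perp$. On that last point, say that $G$ acts on $F$ through a finite permutation group, since that is what gives finite orbits, rather than only that each $g$ has finite order.
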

\begin{proof}
In order to prove the inclusion $\supseteq$, it suffices to show that all the vectors in
$$E'\coloneqq \bigcap _{e\in \mathcal E_{\infty}(L)} (e^{\perp})^{(2)}_{\mathrm{pr}}$$
have finite orbit under $\Aut(\D_L)$. Clearly $\Aut(\D_L)$ sends $E'$ to $E'$. If $\mathcal{E}_\infty(L)$ consists of a unique cusp $e$, then $\Aut(\D_L)$ fixes it; moreover $E'=(e^{\perp})^{(2)}_{\mathrm{pr}} = \langle e \rangle \oplus R$, where $R$ is a negative definite lattice by \Cref{rmk:take.care.of.degenerate.lattice}, and therefore $\Aut(\D_L)$ acts on $R$ (and thus on $E'$) with finite order.
If instead $\mathcal{E}_\infty(L)$ contains at least two distinct cusps, say $e_1$, $e_2$, then $(e_1+e_2)^2 = 2(e_1.e_2) > 0$ and therefore $E'\subseteq \langle e_1 + e_2\rangle^\perp$ is negative definite, so again $\Aut(\D_L)$ acts on $E'$ with finite order.

For the other inclusion, it suffices to show that $E(L)\subseteq (e^\perp)^{(2)}_{\rm pr}$ 
for every cusp $e$ with infinite stabilizer. In fact, we will prove the stronger inclusion
$$E_{\Aut(\mathcal{D}_L,e)}(L) \subseteq (e^\perp)^{(2)}_{\rm pr},$$
where $\Aut(\mathcal{D}_L,e)$ denotes the stabilizer of $e$ in $\Aut(\mathcal{D}_L)$.

Let $v\in E_{\Aut(\mathcal{D}_L,e)}(L)$. Take any $g\in \Aut(\mathcal{D}_L,e)$ of infinite 
order. Up to taking a power of $g$, by assumption we may assume that $g(v)=v$, that is, $g$ 
acts as the identity on the rank two sublattice $\langle e,v \rangle$ spanned by $e$ and 
$v$. If $(e.v) \ne 0$, then the lattice $\langle e,v \rangle$ is hyperbolic, and therefore 
$g$ induces an isometry of infinite order of the negative definite lattice 
$\langle e,v \rangle^\perp$, a contradiction. Hence 
$E_{\Aut(\mathcal{D}_L,e)}(L)\subseteq e^\perp$.

If $W=e^\perp/\langle e \rangle$, $R=W_{\mathrm{root},\rm pr}\subseteq W$, and $R^\perp_W$ denotes the orthogonal complement of $R$ in $W$, then by \Cref{rmk:take.care.of.degenerate.lattice} we have
$(e^\perp)^{(2)}_{\rm pr} = \langle e \rangle \oplus R$ and the inclusion
$$\langle e \rangle \oplus R \oplus R^\perp_W = (e^\perp)^{(2)}_{\rm pr} \oplus R^\perp_W \subseteq  e^\perp$$
is of finite index. Since $(e^\perp)^{(2)}_{\rm pr} \subseteq E_{\Aut(\mathcal{D}_L,e)}(L)$ by the first part of the proof, in order to conclude it suffices to show 
that every $0\ne y\in R^\perp_W$ has infinite orbit under $\Aut(\mathcal{D}_L,e)$. For this, 
consider the Eichler--Siegel transformation
$$\psi_y: x \mapsto x + (x.y)e - (x.e)y-\frac{1}{2}(x.e)(y.y) e.$$
It follows from \cite[Proposition~3.2]{brandhorst.mezzedimi.borcherds} that $\psi_y$ is a symmetry in $\Aut(\mathcal{D}_L,e)$. Since $\psi_y(e) = e$ and $\psi_y(y) = y + (y.y)e$, we obtain
$$\psi_y^m(y) = y + m (y.y) e \ne y$$
for every $m> 0$ by induction, so $y$ has infinite orbit under $\Aut(\mathcal{D}_L,e)$, thus completing the proof.
\end{proof}

\begin{remark} \label{rmk: cusp exists}
\begin{enumerate}
    \item  Every hyperbolic lattice of rank $\ge 20$ admits a cusp with infinite stabilizer. Indeed, every hyperbolic lattice of rank $\ge 20$ has an infinite symmetry group (cf. \cite{nikulin.rank.greater.than.five}), and therefore admits a cusp with infinite stabilizer by \cite[Theorem~6.4.1]{nikulin.rank.greater.than.five}.

    \item Note that \Cref{prop:exceptionalIntersection} fails if $L$ admits no cusps with infinite stabilizer. As an example, let $L$ a hyperbolic lattice of rank $3$ with $\Aut(\D_L)\cong \ZZ$ and no cusp with infinite stabilizer. Such lattices exist, and were classified in \cite{brandhorst.mezzedimi.abelian}; for instance we can take $L=U(11)\oplus A_1$. Then $\Aut(\D_L)$ is generated by a symmetry $f$ with infinite order and Salem degree $2$, and thus $f$ preserves a line in $L$. Hence $E(L)$ has rank $1$, while $E'$ is $L$ itself.
\end{enumerate}
\end{remark}

\section{Proof of \Cref{thm:maintheorem}} \label{section:proof}

We now prove \Cref{thm:maintheorem}. Throughout the section, $L$ is a hyperbolic lattice, and we fix a fundamental domain $\D_L$ for $L$.
We start with the following well-known observation.

\begin{lemma}\label{lem:highestCusp}
Let $L$ be a hyperbolic lattice, and $\Delta$ a finite set of simple roots of $L$, whose associated intersection graph is an extended Dynkin diagram. Denote by $e_\Delta $ the unique positive primitive isotropic vector in the extended ADE lattice $\ZZ[\Delta] \subseteq L$. Then $e_\Delta$ is a cusp in $L$.
\end{lemma}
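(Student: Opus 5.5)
The plan is to verify the three defining properties of a cusp directly: $e_\Delta$ is isotropic, fundamental, and primitive \emph{in $L$}. Isotropy is immediate from the properties of $\widetilde R=\ZZ[\Delta]$ recalled before \Cref{lemma:roots.of.extended.ADE}. We write
$$e_\Delta=\sum_{r\in\Delta} m_r r,$$
where the $m_r$ are the positive marks of the extended Dynkin diagram; the extra node $\tilde r$ has $m_{\tilde r}=1$, and the others are the coefficients of \Cref{highest.roots}.

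\textbf{Fundamental.} $\overline{\D}_L$ is cut out by the inequalities $x.s\ge 0$ for $s$ a simple root, so it suffices to show $e_\Delta.s\ge 0$ for every simple root $s$.
\begin{itemize}
\item If $s\in\Delta$, then $e_\Delta.s=0$, because $e_\Delta$ spans the kernel of $\widetilde R$.
\item If $s\notin\Delta$, then $s.r\ge 0$ for all $r\in\Delta$, by the observation after the definition of simple roots (distinct simple roots never pair negatively). Since all $m_r>0$, this gives $e_\Delta.s\ge 0$.
\end{itemize}
Hence $e_\Delta\in\RR^+\cdot\overline{\D}_L$, possibly after a limiting argument. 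Moreover $e_\Delta$ is positive: it is a positive combination of positive roots, and this is consistent with \Cref{lemma:positive.root}.

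\textbf{Primitivity.} This is the main obstacle. Suppose $e_\Delta=kv$ with $v\in L$ and $k\ge 2$. Then $v$ is isotropic and fundamental, and $v.r=0$ for all $r\in\Delta$. Every vector $r+tv$ with $r$ a root of $\widetilde R$ and $t\in\ZZ$ is a root of $L$ lying in $v^\perp$. I will study the roots of $v^\perp$ through the negative definite quotient $W=v^\perp/\langle v\rangle$ of \Cref{rmk:take.care.of.degenerate.lattice}.

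Let $\bar R\subseteq W$ be the image of $\ZZ[\Delta]$; it is an ADE lattice of type $R$. The roots of $v^\perp$ lying over $\bar R$ then contain the full affine root system $\{\bar w+tv\}$, whose null root is $v$. Near the cusp $v$, the chamber $\D_L$ is a Weyl chamber for this affine system, and the simple roots of $L$ orthogonal to $v$ that lie over $\bar R$ form its simple system. The standard fact for affine root systems is that the mark-weighted sum of the simple roots equals the null root generating the translations. This forces $\sum m_r r=v$, i.e.\ $k=1$.

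To make this concrete without quoting affine Lie theory, I would first try the following elementary argument. The vector $\rho_t:=\tilde r-tv$ is a root for every $t\in\ZZ$. We have $\rho_0=\tilde r$, which is positive, and $\rho_k=\tilde r-e_\Delta=-r_0$, which is negative. Pick the smallest $t\ge 1$ with $\rho_t$ negative; then $\rho_{t-1}$ is positive. I would then try to express $\tilde r$, or one of the other roots of $\Delta$, as a sum of two positive vectors built from $v$ and the $\rho_t$. This would contradict simplicity, using \Cref{lemma:roots.of.extended.ADE} to control the signs of coefficients, in the spirit of the proof of \Cref{lemma:roots.of.extended.ADE}.

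If the direct decomposition does not close up, I will instead pair with an interior point $h\in\D_L$ near the cusp. For each $r\in\Delta$ I compare $h.r$ with $h.v$ via the positivity of $v-r$ or $r-v$. This yields inequalities that, summed with weights $m_r$, should contradict $h.e_\Delta=k\,h.v$ when $k\ge 2$.
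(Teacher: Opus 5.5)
Your check that $e_\Delta$ is fundamental ($e_\Delta.s=0$ for $s\in\Delta$, and $e_\Delta.s\ge 0$ for simple $s\notin\Delta$, because distinct simple roots pair non-negatively and all marks are positive) is exactly the paper's proof, and the paper's proof consists of nothing else. The paper never addresses primitivity of $e_\Delta$ in $L$, as opposed to in $\ZZ[\Delta]$, so you are right to flag it. For the paper's later use this omission is harmless: the primitive vector on the ray $\RR^+e_\Delta$ is a cusp, and it pairs with every root with the same sign as $e_\Delta$.

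\textbf{The affine route.} Your affine-root-system argument for primitivity is sound in outline, but two points need adjusting.
\begin{enumerate}
\item The sentence that the simple roots of $L$ over $\bar R$ ``form its simple system'' needs proof.
\item ``$\D_L$ is a Weyl chamber'' should be weakened to $\D_L\subseteq C'$, where $C'$ is a chamber of the arrangement of the roots $w+tv$ over $\bar R$.
\end{enumerate}
The missing step goes as follows. Each $r\in\Delta$ is positive on $C'$. If $r$ were not $C'$-simple, then $r=\alpha+(r-\alpha)$ with $\alpha$ a $C'$-simple root and $r-\alpha$ a nonzero non-negative integer combination of $C'$-simple roots. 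All $C'$-simple roots are positive in $L$, so this contradicts simplicity of $r$ in $L$. Hence $\Delta$ is contained in, and by counting equal to, the $C'$-simple system. Its mark-weighted sum is therefore the null root $v$.

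\textbf{The elementary fallbacks.} Both would fail.
\begin{enumerate}
\item The $\rho_t$ argument: if $k\ge 2$, then $\rho_1=\tilde r-v$ is already negative, since otherwise $\tilde r=\rho_1+v$ decomposes. So the first sign change is at $t=1$, and it only gives $\tilde r=\rho_1+v$ with a negative summand.
\item The pairing argument: simplicity only tells you that each $v-r$, $r\in\Delta$, is positive, i.e.\ $h.r<h.v$. Summing with the marks gives $k<\sum_r m_r$, which contradicts $k\ge 2$ only for $\widetilde A_1$.
\end{enumerate}

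\textbf{An elementary argument that works.} Fix the shift by $v$ and vary the finite root instead. For every root $\beta$ of $R=\ZZ[r_1,\dots,r_n]$, the vector $\beta-v$ is a root of $L$.
\begin{enumerate}
\item For each simple $r_j$, the root $r_j-v$ is negative; otherwise $r_j=(r_j-v)+v$, with $v$ positive.
\item The root $r_0-v$ is positive; otherwise $\tilde r=(v-r_0)+(k-1)v$ with $k-1\ge 1$.
\end{enumerate}
Take a chain of roots $r_{j_1}=\beta_1,\dots,\beta_m=r_0$ of $R$ in which each difference $\beta_{i+1}-\beta_i=r_{j_{i+1}}$ is simple; such a chain exists in any finite root system. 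Some step must have $\beta_i-v$ negative and $\beta_{i+1}-v$ positive. At that step, $r_{j_{i+1}}=(\beta_{i+1}-v)+(v-\beta_i)$ is a sum of two positive roots, contradicting simplicity. Hence $k=1$.
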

\begin{proof}
It suffices to show that $(e_\Delta.r) \ge 0$ for every simple root $r$ of $L$. If $r$ is one of the roots in $\Delta$, then $(e_\Delta.r)=0$. If instead $r\notin \Delta$, then $r$ has nonnegative inner product with all the simple roots in $\Delta$, and in particular $(e_\Delta.r) \ge 0$.
\end{proof}

The following is a key observation for our approach:

\begin{lemma}\label{lem:easyCusps}
If $L=U\oplus E_8^n\oplus M$ with $n\ge 5$ and $M$ negative definite, then $E(L)\subseteq M$.
\end{lemma}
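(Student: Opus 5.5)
We plan to use \Cref{prop:exceptionalIntersection}: it suffices to find cusps with infinite stabilizer whose associated lattices $(e^\perp)^{(2)}_{\rm pr}$ intersect inside $M$. Cusps with infinite stabilizer exist because $\rk L\ge 42\ge 20$ (\Cref{rmk: cusp exists}), so the proposition applies. Write $L=U\oplus E_8^{n}\oplus M = U\oplus E_8^3\oplus E_8^{n-3}\oplus M$ and use $U\oplus E_8^3\cong U\oplus\Lambda$ with $\Lambda$ the Leech lattice. Then $L\cong U\oplus \Lambda\oplus N$ with $N=E_8^{n-3}\oplus M$. Let $f$ be the isotropic generator of $U$ in this decomposition. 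Then $f^\perp/\langle f\rangle\cong \Lambda\oplus N$. Its root sublattice is exactly the root sublattice of $N$, since $\Lambda$ has no roots and a root of $\Lambda\oplus N$ must have zero $\Lambda$-component (components have norm $\le 0$, and $\Lambda$ has no vectors of norm $0$ or $-2$ other than zero). Its saturation therefore lies in $N$, and $\rk(f^\perp)^{(2)}<\rk f^\perp$ because $\Lambda\neq 0$. So $f$ has infinite stabilizer. The vector $f$ may not be fundamental, but some $W(L)$-translate $w(f)$ is a cusp. Since $w$ is an isometry preserving $(\cdot)^{(2)}_{\rm pr}$, we get $E(L)\subseteq w\bigl(\langle f\rangle\oplus N_{\rm root,pr}\bigr)$.

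Next we exploit the freedom in choosing which copies of $E_8$ to merge with $U$. Because $n\ge 5$, for each $i\in\{1,\dots,n\}$ we can pick three copies of $E_8$ not containing the $i$-th, and also different triples. This gives decompositions $L\cong U\oplus\Lambda_T\oplus (\text{complement})$, indexed by triples $T\subseteq\{1,\dots,n\}$. The $T$-th decomposition yields a cusp $e_T$ with infinite stabilizer and $E(L)\subseteq (e_T^\perp)^{(2)}_{\rm pr}$. To avoid the $W(L)$-translation ambiguity, we would fix the chamber using \Cref{lem:summandFundamental}. Choosing $\D_{U\oplus\Lambda}$ so that the Leech cusp $f$ is fundamental there, we obtain a fundamental domain of $L$ whose closure contains $f$. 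We would do this for all the $T$ simultaneously. The cleanest way is to work with a single fundamental vector in the common part $U$, since every decomposition shares the same $U\oplus E_8^n$ ambient unimodular summand.

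The main obstacle is precisely this compatibility of chambers across different decompositions. A workable alternative is to avoid chambers entirely by using $E(L)$ as a $\Aut(\D_L)$-intrinsic object together with conjugation. Each $w_T(f_T)$ is a cusp, so $E(L)\subseteq w_T\bigl(\langle f_T\rangle \oplus N_{T,\rm root,pr}\bigr)$. Moreover, $E(L)$ is $\Or^+(L)$-invariant up to the $W(L)$ action, because changing chamber conjugates $\Aut(\D_L)$ by elements of $W(L)$. Thus $E(L)$ is well defined only up to $W(L)$, and it suffices to show that $\bigcap_T(\langle f_T\rangle\oplus N_{T,\rm root,pr})\subseteq M$ after choosing the $f_T$ compatibly. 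Concretely, we would take $f_T$ to be the image of a fixed isotropic $f\in U$ under the isometry $U\oplus E_8^3\cong U\oplus\Lambda$ acting on the $T$-factors. One must check that these images can be chosen as cusps for one common chamber, which we would try to ensure via \Cref{lem:summandFundamental} applied to $U\oplus E_8^T$ inside $L$.

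Granting this, the intersection is computed directly. Any $v\in E(L)$ is orthogonal to every $f_T$ and lies in $\langle f_T\rangle\oplus E_8^{\{1..n\}\setminus T}\oplus M_{\rm root,pr}$. Intersecting over triples $T$ covering each index $i$ kills each $E_8$ component. Using two cusps $f_T\neq f_{T'}$, the argument in the proof of \Cref{prop:exceptionalIntersection} shows that the intersection is negative definite, which kills the isotropic parts. Hence $E(L)\subseteq M$.
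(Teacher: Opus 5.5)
\textbf{Verdict: genuine gap in the final step.} Your set-up is the paper's. You use Leech-type cusps $f_T\in U\oplus E_8^T$ coming from $U\oplus E_8^3\cong U\oplus\Lambda$, where $E_8^T$ is the sum of the $E_8$ summands indexed by $T$ and $E_8^{(i)}$ is the $i$-th one. They have infinite stabilizer by Shioda--Tate, and you apply \Cref{prop:exceptionalIntersection} to many triples. The common chamber that you only ``grant'' is needed, but it is the easy part; the paper takes it from \Cref{lem:summandFundamental}. Take $\D_L$ whose closure contains a point $v\in\HH_U$ off the wall of the root of $U$. Every root orthogonal to $v$ lies in $E_8^n\oplus M$, so near $v$ the chamber is a product of Weyl chambers. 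The argument of \Cref{lem:summandFundamental} then shows that $\overline{\D}_L\cap\HH_{U\oplus E_8^T}$ is the closure of a chamber of $U\oplus E_8^T$ for all $T$ at once. Each such chamber has a Leech cusp in its closure, since any isotropic vector can be moved there by the Weyl group.

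The gap is your last paragraph. Write $v=a_Tf_T+x_T+m$ with $x_T\in E_8^{T^c}$.
\begin{itemize}
\item For $i\in T$ the $E_8^{(i)}$-component of $v$ is $a_T(f_T)_i$, so covering all indices by triples kills nothing unless you already know $a_T=0$.
\item Negative definiteness does not give $a_T=0$: $af_T+y$ with $0\neq y\in E_8^{T^c}\oplus M$ has norm $y^2<0$ (think of the roots $f_T-r_0$ of the extended diagrams).
\item Orthogonality to the other cusps does not help either. If $T'\setminus T=\{j\}$, then $v.f_{T'}=a_T\,(f_T.f_{T'})+x_T.(f_{T'})_j$, and the two terms can cancel.
\end{itemize}
In fact the intersection you compute need not lie in $M$. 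Take a chamber invariant under permuting the $E_8$ summands, e.g.\ the one above with the same simple roots in each summand. The fundamental Leech cusp of $U\oplus E_8^{\{1,2,3\}}$ is unique (cf.\ the case ${\rm II}_{1,25}$ in the introduction), hence $S_3$-invariant. So $f_T=u+\sum_{i\in T}w^{(i)}$ for a fixed $u\in U$ and $0\neq w\in E_8$. Then $x=u+\sum_{i=1}^n w^{(i)}=f_T+\sum_{i\notin T}w^{(i)}$ satisfies:
\begin{itemize}
\item $x$ lies in $\langle f_T\rangle\oplus E_8^{T^c}$ for every triple $T$;
\item $x$ is orthogonal to every $f_T$ and has norm $(n-3)w^2<0$;
\item $x$ is not in $M$.
\end{itemize}

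So the missing ingredient is positivity, which is how the paper argues. It works with positive roots $r\in E(L)$, taking from \Cref{prop:exceptionalIntersection} that $E(L)$ is spanned over $\QQ$ by roots. Such an $r$ lies in $\langle e_{123}\rangle\oplus E_8^{(i)}$ for a single $i\ge 4$, or in $\langle e_{123}\rangle\oplus M_0$ for a single ADE summand $M_0$ of $M_{\mathrm{root}}$. By \Cref{lemma:roots.of.extended.ADE} it is then a non-negative combination of the simple roots $r_1,\dots,r_8,\tilde r=e_{123}-r_0$, each pairing non-negatively with every cusp. Each of these pairs strictly positively with some cusp $e_{12j}$:
\begin{itemize}
\item $r_k.e_{12i}>0$, because $\Lambda$ has no roots;
\item $\tilde r.e_{12j}=e_{123}.e_{12j}>0$ for $j\ge 4$, $j\ne i$, by \Cref{lemma:distinct.cusps};
\item $(e_{123}-r_0).e_{124}=e_{123}.e_{124}>0$ when $r_0$ is the highest root of $M_0$.
\end{itemize}
Hence $r\notin e_{12j}^\perp\supseteq E(L)$.
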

\begin{proof}
Recall that there exists an isometry $U\oplus E_8^3\cong U\oplus \Lambda$, where $\Lambda$ is the Leech lattice.
 In particular, there exists a cusp $e\in U\oplus E_8^3$ such that $e^\perp/\langle e \rangle \cong \Lambda$ has no roots, and therefore $(e^\perp)^{(2)}_{\rm pr} = \langle e \rangle$.

For each triple $(i,j,k)$, with $i<j<k$ and $i,j,k\in \{1,\ldots,n\}$, consider a cusp $e_{ijk} \in U\oplus E_8^3$ (where the three copies of $E_8$ are $i$-th, $j$-th and $k$-th direct summand of type $E_8$ of $L$) as in the previous paragraph such that $e_{ijk}^\perp/\langle e_{ijk} \rangle \cong \Lambda$, and view it as a cusp in $L$ by \Cref{lem:summandFundamental}. By the Shioda--Tate formula \cite[Corollary~1.5.4]{nikulin.rank.greater.than.five}, $e_{ijk}$ has infinite stabilizer in $L$.
Moreover
$$(e_{123}^\perp)^{(2)}_{\rm pr} \subseteq \langle e_{123} \rangle \oplus E_8^{n-3} \oplus M,$$
where the copies of $E_8$ are the last $n-3$. Since $E(L)$ is generated (over $\QQ$) by roots by \Cref{prop:exceptionalIntersection}, it suffices to show that any positive root $r\in E(L)$ belongs to $M$. Since $e_{123}$ has infinite stabilizer, $r\in (e_{123}^\perp)^{(2)}_{\rm pr} \subseteq \langle e_{123} \rangle \oplus E_8^{n-3} \oplus M$ by \Cref{prop:exceptionalIntersection}. Since $E_8$ and $M$ are negative definite, the root $r$ belongs to either $\langle e_{123} \rangle \oplus E_8$ (for one of the $n-3$ copies of $E_8$) or $\langle e_{123} \rangle \oplus M$; in other words $r$ is one of the roots in the extended Dynkin diagram $\widetilde{E}_8$ of $E_8$ or of an ADE lattice in $M_{\mathrm{root}}$.

Assume first that $r\in \langle e_{123} \rangle\oplus E_8$ for the $i$-th copy of $E_8$. If $\{r_1,\ldots,r_8\}$ is the standard basis of $E_8$, with $r_0\in E_8$ the highest root, then we consider the basis $\{r_1,\ldots,r_8,\tilde r\}$ of roots of $\langle e_{123} \rangle\oplus E_8$, with $\tilde{r}=e_{123}-r_0$. By \Cref{lemma:roots.of.extended.ADE} we may assume that $r\in \{r_1,\ldots,r_8,\tilde r\}$. Observe that, since the Leech lattice $\Lambda$ has no root, we have $r_j.e_{12i} > 0$ for every $1\le j\le 8$, so $r_j\notin E(L)$. If instead $r = \tilde{r}$, consider the cusp $e_{12j}$ for some $j\ge 3$, $j\ne i$. It is orthogonal to the $i$-th copy of $E_8$, but $e_{12j}.e_{12i}>0$ by \Cref{lemma:distinct.cusps}, so $e_{12j}.\tilde{r}>0$, and therefore $\tilde{r}\notin E(L)$. 

Assume now that $r\in \langle e_{123} \rangle \oplus M_0$ for an ADE factor $M_0$ of $M_{\mathrm{root}}$. If $r$ is the root $e_{123}-r_0$, for the highest root $r_0$ of $M_0$, then $e_{124}.r>0$, since $M\subseteq e_{124}^\perp$ and $e_{123}.e_{124}>0$ by \Cref{lemma:distinct.cusps}. In particular we have shown that
$$\bigcap_{i\ge 3}{(e_{12i}^{\perp})^{(2)}_{\mathrm{pr}}} \subseteq M,$$
and therefore $E(L)\subseteq M$ by \Cref{prop:exceptionalIntersection}.
\end{proof}

We are now ready to prove \Cref{thm:maintheorem}.

\begin{proof}[Proof of \Cref{thm:maintheorem}]
Let $L$ any hyperbolic lattice of rank $\ge 46$. By \Cref{prop:structure}, there exists an overlattice $L'$ of $L$ such that
$$L'\cong U\oplus E_8^n \oplus M,$$
with $n\ge 5$ and $M$ negative definite of rank at most $11$, and by \Cref{lem:overlatticeChambers} it suffices to prove the theorem for $L'$.

By \Cref{lem:easyCusps}, the exceptional lattice $E(L')$ is contained in $M$, and in particular it is negative definite.
Therefore by \Cref{prop:exceptionalIntersection} it suffices to show that, for any simple root $r$ in $M$, there exists a cusp $e\in \mathcal{E}_\infty(L')$ with infinite stabilizer such that $e.r>0$.

By using the isometry 
$$L'\cong U\oplus \Lambda \oplus E_8^{n-3}\oplus M,$$
where $\Lambda$ denotes the Leech lattice, we only need to find a cusp $e$ in
$$L''\coloneqq U\oplus E_8^{n-3}\oplus M = \Lambda^\perp \subseteq L'$$
such that $(e.r)>0$, since $e$ is a cusp in $L'$ with infinite stabilizer by \Cref{lem:overlatticeChambers}

We can make use of some final reduction steps: first, since $n-3 \ge 2$, we may assume that $n-3 = 2$. Secondly, the root part $M_{\mathrm{root}}$ of $M$ decomposes as a direct sum of ADE lattices $R_1,\ldots, R_k$, and the set of simple roots in $M$ is the union of the simple roots in each $R_i$, so without loss of generality we may assume that $M_{\mathrm{root}}$ is an ADE lattice. Finally, by \Cref{prop:structure} we may assume by maximality of $n$ that $M_{\mathrm{root}}$ is of type $A_m$ or $D_m$ with $m\le 8$, or of type $E_6$ or $E_7$. 

It remains to prove the following statement: if $L''=U\oplus E_8^2\oplus M$, where the root part $M_{\mathrm{root}}$ of $M$ is one of the ADE lattices above, and $r\in M$ is a simple root, then there exists a cusp $e$ of $L''$ such that $(e.r)>0$. We will use the following strategy: the only cusp $e_0\in U$ induces the dual graph $\Gamma$ of simple roots of $L''$ contained in $e_0^\perp$.
The graph $\Gamma$ contains two extended Dynkin diagrams of type $\widetilde{E}_8$, an extended Dynkin diagram of type $\widetilde{M}_{\mathrm{root}}$, and the simple root $s_0\in U$ intersecting simply a vertex of multiplicity $1$ of each extended Dynkin diagram. For each vertex $r$ of the Dynkin diagram of $M_{\mathrm{root}}$, we will exhibit an extended Dynkin diagram $\Delta\subseteq \Gamma$, whose associated primitive positive isotropic vector $e_\Delta$ (which is a cusp by \Cref{lem:highestCusp}) satisfies $(e_\Delta.r) >0$.

Consider first a vertex at distance $2$, $3$, $4$ or $6$ from $s_{0}$ in the Dynkin diagram of $M_{\mathrm{root}}$. In these cases, there is an extended Dynkin diagram of shape $\widetilde{D}_{10}$, $\widetilde E_{6}$, $\widetilde E_{7}$ and $\widetilde E_{8}$, respectively, intersecting the vertex positively, as can be seen in the following diagrams:

\begin{center}
\begin{tikzpicture}[scale=0.3]
\node (S) at (0,0) [nodal] {};

\node (T0) at (-2,2) [nodal] {};
\node (T1) at (-2,3) [nodal] {};
\node (T2) at (-2,4) [nodal] {};
\node (T3) at (-2,5) [nodal] {};
\node (T4) at (-2,6) [nodal] {};
\node (T5) at (-2,7) [nodal] {};
\node (T6) at (-3,7) [nodal] {};
\node (T7) at (-2,8) [nodal] {};
\node (T8) at (-2,9) [nodal,fill=white] {};
\draw(T0)--(T1)--(T2)--(T3)--(T4)--(T5)--(T6) (T5)--(T7)--(T8);
\draw (S)--(T0);

\node (U0) at (0,2) [nodal] {};
\node (U1) at (0,3) [nodal,fill=white] {};
\node (U2) at (0,4) [nodal,fill=white] {};
\node (U3) at (0,5) [nodal,fill=white] {};
\node (U4) at (0,6) [nodal,fill=white] {};
\node (U5) at (0,7) [nodal,fill=white] {};
\node (U6) at (1,7) [nodal,fill=white] {};
\node (U7) at (0,8) [nodal,fill=white] {};
\node (U8) at (0,9) [nodal,fill=white] {};
\draw(U0)--(U1)--(U2)--(U3)--(U4)--(U5)--(U6) (U5)--(U7)--(U8);
\draw (S)--(U0);

\node (S0) at (2,2) [nodal] {};
\node (S1) at (3,2) [nodal, fill=red] {};
\node (S2) at (4.3,1.95) {$\cdots$};
\draw(S0)--(S1);
\draw (S)--(S0);

\end{tikzpicture}
\begin{tikzpicture}[scale=0.3]
\node (S) at (0,0) [nodal] {};

\node (T0) at (-2,2) [nodal] {};
\node (T1) at (-2,3) [nodal] {};
\node (T2) at (-2,4) [nodal,fill=white] {};
\node (T3) at (-2,5) [nodal,fill=white] {};
\node (T4) at (-2,6) [nodal,fill=white] {};
\node (T5) at (-2,7) [nodal,fill=white] {};
\node (T6) at (-3,7) [nodal,fill=white] {};
\node (T7) at (-2,8) [nodal,fill=white] {};
\node (T8) at (-2,9) [nodal,fill=white] {};
\draw(T0)--(T1)--(T2)--(T3)--(T4)--(T5)--(T6) (T5)--(T7)--(T8);
\draw (S)--(T0);

\node (U0) at (0,2) [nodal] {};
\node (U1) at (0,3) [nodal] {};
\node (U2) at (0,4) [nodal,fill=white] {};
\node (U3) at (0,5) [nodal,fill=white] {};
\node (U4) at (0,6) [nodal,fill=white] {};
\node (U5) at (0,7) [nodal,fill=white] {};
\node (U6) at (1,7) [nodal,fill=white] {};
\node (U7) at (0,8) [nodal,fill=white] {};
\node (U8) at (0,9) [nodal,fill=white] {};
\draw(U0)--(U1)--(U2)--(U3)--(U4)--(U5)--(U6) (U5)--(U7)--(U8);
\draw (S)--(U0);

\node (S0) at (2,2) [nodal] {};
\node (S1) at (3,2) [nodal] {};
\node (S2) at (4,2) [nodal, fill=red] {};
\node (S3) at (5.3,1.95) {$\cdots$};
\draw(S0)--(S1)--(S2);
\draw (S)--(S0);

\end{tikzpicture}
\begin{tikzpicture}[scale=0.3]
\node (S) at (0,0) [nodal] {};

\node (T0) at (-2,2) [nodal] {};
\node (T1) at (-2,3) [nodal] {};
\node (T2) at (-2,4) [nodal] {};
\node (T3) at (-2,5) [nodal,fill=white] {};
\node (T4) at (-2,6) [nodal,fill=white] {};
\node (T5) at (-2,7) [nodal,fill=white] {};
\node (T6) at (-3,7) [nodal,fill=white] {};
\node (T7) at (-2,8) [nodal,fill=white] {};
\node (T8) at (-2,9) [nodal,fill=white] {};
\draw(T0)--(T1)--(T2)--(T3)--(T4)--(T5)--(T6) (T5)--(T7)--(T8);
\draw (S)--(T0);

\node (U0) at (0,2) [nodal] {};
\node (U1) at (0,3) [nodal,fill=white] {};
\node (U2) at (0,4) [nodal,fill=white] {};
\node (U3) at (0,5) [nodal,fill=white] {};
\node (U4) at (0,6) [nodal,fill=white] {};
\node (U5) at (0,7) [nodal,fill=white] {};
\node (U6) at (1,7) [nodal,fill=white] {};
\node (U7) at (0,8) [nodal,fill=white] {};
\node (U8) at (0,9) [nodal,fill=white] {};
\draw(U0)--(U1)--(U2)--(U3)--(U4)--(U5)--(U6) (U5)--(U7)--(U8);
\draw (S)--(U0);

\node (S0) at (2,2) [nodal] {};
\node (S1) at (3,2) [nodal] {};
\node (S2) at (4,2) [nodal] {};
\node (S3) at (5,2) [nodal, fill=red] {};
\node (S4) at (6.3,1.95) {$\cdots$};
\draw(S0)--(S1)--(S2)--(S3);
\draw (S)--(S0);

\end{tikzpicture}
\begin{tikzpicture}[scale=0.3]
\node (S) at (0,0) [nodal] {};

\node (T0) at (-2,2) [nodal] {};
\node (T1) at (-2,3) [nodal] {};
\node (T2) at (-2,4) [nodal,fill=white] {};
\node (T3) at (-2,5) [nodal,fill=white] {};
\node (T4) at (-2,6) [nodal,fill=white] {};
\node (T5) at (-2,7) [nodal,fill=white] {};
\node (T6) at (-3,7) [nodal,fill=white] {};
\node (T7) at (-2,8) [nodal,fill=white] {};
\node (T8) at (-2,9) [nodal,fill=white] {};
\draw(T0)--(T1)--(T2)--(T3)--(T4)--(T5)--(T6) (T5)--(T7)--(T8);
\draw (S)--(T0);

\node (U0) at (0,2) [nodal] {};
\node (U1) at (0,3) [nodal,fill=white] {};
\node (U2) at (0,4) [nodal,fill=white] {};
\node (U3) at (0,5) [nodal,fill=white] {};
\node (U4) at (0,6) [nodal,fill=white] {};
\node (U5) at (0,7) [nodal,fill=white] {};
\node (U6) at (1,7) [nodal,fill=white] {};
\node (U7) at (0,8) [nodal,fill=white] {};
\node (U8) at (0,9) [nodal,fill=white] {};
\draw(U0)--(U1)--(U2)--(U3)--(U4)--(U5)--(U6) (U5)--(U7)--(U8);
\draw (S)--(U0);

\node (S0) at (2,2) [nodal] {};
\node (S1) at (3,2) [nodal] {};
\node (S2) at (4,2) [nodal] {};
\node (S3) at (5,2) [nodal] {};
\node (S4) at (6,2) [nodal] {};
\node (S5) at (7,2) [nodal, fill=red] {};
\node (S6) at (8.3,1.95) {$\cdots$};
\draw(S0)--(S1)--(S2)--(S3)--(S4)--(S5);
\draw (S)--(S0);

\end{tikzpicture}

\end{center}

Note that this settles the cases $A_m$ with $m \leq 6$ and $D_m$ with $m \leq 5$. Next, given a diagram of type $\widetilde{A}_m$ or $\widetilde{D}_m$, with $m \geq 6$, we will exhibit extended Dynkin diagrams whose associated cusps intersect nodes at distance $5$ from $s_{0}$, namely of shape $\widetilde E_7$ and $\widetilde E_8$. This takes care of all simple roots of $A_m$, $m\le 8$ and all simple roots of $D_m$, $m\le 7$. For $D_8$ we additionally exhibit a cusp intersecting the furthest vertices, at distance $7$ from $s_0$, coming from an extended Dynkin diagram of shape $\widetilde E_8$.

\begin{center}
\begin{tikzpicture}[scale=0.4]
\node (S) at (0,0) [nodal] {};

\node (T0) at (-2,2) [nodal,fill=white] {};
\node (T1) at (-2,3) [nodal,fill=white] {};
\node (T2) at (-2,4) [nodal,fill=white] {};
\node (T3) at (-2,5) [nodal,fill=white] {};
\node (T4) at (-2,6) [nodal,fill=white] {};
\node (T5) at (-2,7) [nodal,fill=white] {};
\node (T6) at (-3,7) [nodal,fill=white] {};
\node (T7) at (-2,8) [nodal,fill=white] {};
\node (T8) at (-2,9) [nodal,fill=white] {};
\draw(T0)--(T1)--(T2)--(T3)--(T4)--(T5)--(T6) (T5)--(T7)--(T8);
\draw (S)--(T0);

\node (U0) at (0,2) [nodal] {};
\node (U1) at (0,3) [nodal] {};
\node (U2) at (0,4) [nodal,fill=white] {};
\node (U3) at (0,5) [nodal,fill=white] {};
\node (U4) at (0,6) [nodal,fill=white] {};
\node (U5) at (0,7) [nodal,fill=white] {};
\node (U6) at (1,7) [nodal,fill=white] {};
\node (U7) at (0,8) [nodal,fill=white] {};
\node (U8) at (0,9) [nodal,fill=white] {};
\draw(U0)--(U1)--(U2)--(U3)--(U4)--(U5)--(U6) (U5)--(U7)--(U8);
\draw (S)--(U0);

\node (Y0) at (2,2) [nodal] {};
\node (Y1) at (2,3) [nodal] {};
\node (Y2) at (3,2) [nodal] {};
\node (Y3) at (4,2) [nodal] {};
\node (Y4) at (5,2) [nodal] {};
\node (Y5) at (6,2) [nodal, fill=red] {};
\node (Y6) at (7,2) {$\cdots$};
\node (Y7) at (2.0,4.26) {$\vdots$};
\draw(Y1)--(Y0)--(Y2)--(Y3)--(Y4)--(Y5);
\draw (S)--(Y0);

\end{tikzpicture}
\begin{tikzpicture}[scale=0.4]
\node (S) at (0,0) [nodal] {};

\node (T0) at (-2,2) [nodal,fill=white] {};
\node (T1) at (-2,3) [nodal,fill=white] {};
\node (T2) at (-2,4) [nodal,fill=white] {};
\node (T3) at (-2,5) [nodal,fill=white] {};
\node (T4) at (-2,6) [nodal,fill=white] {};
\node (T5) at (-2,7) [nodal,fill=white] {};
\node (T6) at (-3,7) [nodal,fill=white] {};
\node (T7) at (-2,8) [nodal,fill=white] {};
\node (T8) at (-2,9) [nodal,fill=white] {};
\draw(T0)--(T1)--(T2)--(T3)--(T4)--(T5)--(T6) (T5)--(T7)--(T8);
\draw (S)--(T0);

\node (U0) at (0,2) [nodal] {};
\node (U1) at (0,3) [nodal] {};
\node (U2) at (0,4) [nodal] {};
\node (U3) at (0,5) [nodal,fill=white] {};
\node (U4) at (0,6) [nodal,fill=white] {};
\node (U5) at (0,7) [nodal,fill=white] {};
\node (U6) at (1,7) [nodal,fill=white] {};
\node (U7) at (0,8) [nodal,fill=white] {};
\node (U8) at (0,9) [nodal,fill=white] {};
\draw(U0)--(U1)--(U2)--(U3)--(U4)--(U5)--(U6) (U5)--(U7)--(U8);
\draw (S)--(U0);

\node (X0) at (2,2) [nodal] {};
\node (X1) at (3,3) [nodal] {};
\node (X2) at (2,4) [nodal] {};
\node (X3) at (4,3) [nodal] {};
\node (X4) at (5,3) [nodal] {};
\node (X5) at (6,3) [nodal, fill=red] {};
\node (X6) at (7,3) {$\cdots$};
\draw(X0)--(X1)--(X2) (X1)--(X3)--(X4)--(X5);
\draw (S)--(X0);

\end{tikzpicture}
\begin{tikzpicture}[scale=0.4]
\node (S) at (0,0) [nodal] {};

\node (T0) at (-2,2) [nodal,fill=white] {};
\node (T1) at (-2,3) [nodal,fill=white] {};
\node (T2) at (-2,4) [nodal,fill=white] {};
\node (T3) at (-2,5) [nodal,fill=white] {};
\node (T4) at (-2,6) [nodal,fill=white] {};
\node (T5) at (-2,7) [nodal,fill=white] {};
\node (T6) at (-3,7) [nodal,fill=white] {};
\node (T7) at (-2,8) [nodal,fill=white] {};
\node (T8) at (-2,9) [nodal,fill=white] {};
\draw(T0)--(T1)--(T2)--(T3)--(T4)--(T5)--(T6) (T5)--(T7)--(T8);
\draw (S)--(T0);

\node (U0) at (0,2) [nodal,fill=white] {};
\node (U1) at (0,3) [nodal,fill=white] {};
\node (U2) at (0,4) [nodal,fill=white] {};
\node (U3) at (0,5) [nodal,fill=white] {};
\node (U4) at (0,6) [nodal,fill=white] {};
\node (U5) at (0,7) [nodal,fill=white] {};
\node (U6) at (1,7) [nodal,fill=white] {};
\node (U7) at (0,8) [nodal,fill=white] {};
\node (U8) at (0,9) [nodal,fill=white] {};
\draw(U0)--(U1)--(U2)--(U3)--(U4)--(U5)--(U6) (U5)--(U7)--(U8);
\draw (S)--(U0);

\node (D0) at (2,2) [nodal] {};
\node (D1) at (3,3) [nodal] {};
\node (D2) at (2,4) [nodal] {};
\node (D3) at (4,3) [nodal] {};
\node (D4) at (5,3) [nodal] {};
\node (D5) at (6,3) [nodal] {};
\node (D6) at (7,3) [nodal] {};
\node (D7) at (8,2) [nodal] {};
\node (D8) at (8,4) [nodal, fill=red] {};
\draw(D0)--(D1)--(D2) (D1)--(D3)--(D4)--(D5)--(D6)--(D7) (D6)--(D8);
\draw (S)--(D0);

\end{tikzpicture}
\end{center}

This leaves the cases of $ M_{\mathrm{root}} = E_{6}$ and $ M_{\mathrm{root}} = E_{7}$. In the first case, the vertices at distance $5$ from $s_{0}$ are intersected by a cusp given by a $\widetilde D_{7}$ diagram, while in the second case, the two vertices at distance $5$ and the vertex at distance $7$ are all intersected by a cusp given by an $\widetilde E_{8}$ diagram, as shown below:

\begin{center}
\begin{tikzpicture}[scale=0.3]
\node (S) at (0,0) [nodal] {};

\node (T0) at (-2,2) [nodal] {};
\node (T1) at (-2,3) [nodal,fill=white] {};
\node (T2) at (-2,4) [nodal,fill=white] {};
\node (T3) at (-2,5) [nodal,fill=white] {};
\node (T4) at (-2,6) [nodal,fill=white] {};
\node (T5) at (-2,7) [nodal,fill=white] {};
\node (T6) at (-3,7) [nodal,fill=white] {};
\node (T7) at (-2,8) [nodal,fill=white] {};
\node (T8) at (-2,9) [nodal,fill=white] {};
\draw(T0)--(T1)--(T2)--(T3)--(T4)--(T5)--(T6) (T5)--(T7)--(T8);
\draw (S)--(T0);

\node (U0) at (0,2) [nodal] {};
\node (U1) at (0,3) [nodal,fill=white] {};
\node (U2) at (0,4) [nodal,fill=white] {};
\node (U3) at (0,5) [nodal,fill=white] {};
\node (U4) at (0,6) [nodal,fill=white] {};
\node (U5) at (0,7) [nodal,fill=white] {};
\node (U6) at (1,7) [nodal,fill=white] {};
\node (U7) at (0,8) [nodal,fill=white] {};
\node (U8) at (0,9) [nodal,fill=white] {};
\draw(U0)--(U1)--(U2)--(U3)--(U4)--(U5)--(U6) (U5)--(U7)--(U8);
\draw (S)--(U0);

\node (E0) at (2,2) [nodal] {};
\node (E1) at (3,2) [nodal] {};
\node (E2) at (4,2) [nodal] {};
\node (E3) at (4,3) [nodal] {};
\node (E4) at (4,4) [nodal, fill=red] {};
\node (E5) at (5,2) [nodal] {};
\node (E6) at (6,2) [nodal, fill=red] {};
\draw(E0)--(E1)--(E2)--(E3)--(E4) (E2)--(E5)--(E6);
\draw (S)--(E0);

\end{tikzpicture}
\begin{tikzpicture}[scale=0.3]
\node (S) at (0,0) [nodal] {};

\node (T0) at (-2,2) [nodal] {};
\node (T1) at (-2,3) [nodal] {};
\node (T2) at (-2,4) [nodal,fill=white] {};
\node (T3) at (-2,5) [nodal,fill=white] {};
\node (T4) at (-2,6) [nodal,fill=white] {};
\node (T5) at (-2,7) [nodal,fill=white] {};
\node (T6) at (-3,7) [nodal,fill=white] {};
\node (T7) at (-2,8) [nodal,fill=white] {};
\node (T8) at (-2,9) [nodal,fill=white] {};
\draw(T0)--(T1)--(T2)--(T3)--(T4)--(T5)--(T6) (T5)--(T7)--(T8);
\draw (S)--(T0);

\node (U0) at (0,2) [nodal] {};
\node (U1) at (0,3) [nodal,fill=white] {};
\node (U2) at (0,4) [nodal,fill=white] {};
\node (U3) at (0,5) [nodal,fill=white] {};
\node (U4) at (0,6) [nodal,fill=white] {};
\node (U5) at (0,7) [nodal,fill=white] {};
\node (U6) at (1,7) [nodal,fill=white] {};
\node (U7) at (0,8) [nodal,fill=white] {};
\node (U8) at (0,9) [nodal,fill=white] {};
\draw(U0)--(U1)--(U2)--(U3)--(U4)--(U5)--(U6) (U5)--(U7)--(U8);
\draw (S)--(U0);

\node (E0) at (2,2) [nodal] {};
\node (E1) at (3,2) [nodal] {};
\node (E2) at (4,2) [nodal] {};
\node (E3) at (5,2) [nodal] {};
\node (E4) at (5,3) [nodal] {};
\node (E5) at (6,2) [nodal, fill=red] {};
\node (E6) at (7,2) [nodal,fill=white] {};
\node (E7) at (8,2) [nodal,fill=white] {};
\draw(E0)--(E1)--(E2)--(E3)--(E4) (E3)--(E5)--(E6)--(E7);
\draw (S)--(E0);

\end{tikzpicture}
\begin{tikzpicture}[scale=0.3]
\node (S) at (0,0) [nodal] {};

\node (T0) at (-2,2) [nodal] {};
\node (T1) at (-2,3) [nodal] {};
\node (T2) at (-2,4) [nodal,fill=white] {};
\node (T3) at (-2,5) [nodal,fill=white] {};
\node (T4) at (-2,6) [nodal,fill=white] {};
\node (T5) at (-2,7) [nodal,fill=white] {};
\node (T6) at (-3,7) [nodal,fill=white] {};
\node (T7) at (-2,8) [nodal,fill=white] {};
\node (T8) at (-2,9) [nodal,fill=white] {};
\draw(T0)--(T1)--(T2)--(T3)--(T4)--(T5)--(T6) (T5)--(T7)--(T8);
\draw (S)--(T0);

\node (U0) at (0,2) [nodal] {};
\node (U1) at (0,3) [nodal,fill=white] {};
\node (U2) at (0,4) [nodal,fill=white] {};
\node (U3) at (0,5) [nodal,fill=white] {};
\node (U4) at (0,6) [nodal,fill=white] {};
\node (U5) at (0,7) [nodal,fill=white] {};
\node (U6) at (1,7) [nodal,fill=white] {};
\node (U7) at (0,8) [nodal,fill=white] {};
\node (U8) at (0,9) [nodal,fill=white] {};
\draw(U0)--(U1)--(U2)--(U3)--(U4)--(U5)--(U6) (U5)--(U7)--(U8);
\draw (S)--(U0);

\node (E0) at (2,2) [nodal] {};
\node (E1) at (3,2) [nodal] {};
\node (E2) at (4,2) [nodal] {};
\node (E3) at (5,2) [nodal] {};
\node (E4) at (5,3) [nodal, fill=red] {};
\node (E5) at (6,2) [nodal] {};
\node (E6) at (7,2) [nodal,fill=white] {};
\node (E7) at (8,2) [nodal,fill=white] {};
\draw(E0)--(E1)--(E2)--(E3)--(E4) (E3)--(E5)--(E6)--(E7);
\draw (S)--(E0);

\end{tikzpicture}
\begin{tikzpicture}[scale=0.3]
\node (S) at (0,0) [nodal] {};

\node (T0) at (-2,2) [nodal,fill=white] {};
\node (T1) at (-2,3) [nodal,fill=white] {};
\node (T2) at (-2,4) [nodal,fill=white] {};
\node (T3) at (-2,5) [nodal,fill=white] {};
\node (T4) at (-2,6) [nodal,fill=white] {};
\node (T5) at (-2,7) [nodal,fill=white] {};
\node (T6) at (-3,7) [nodal,fill=white] {};
\node (T7) at (-2,8) [nodal,fill=white] {};
\node (T8) at (-2,9) [nodal,fill=white] {};
\draw(T0)--(T1)--(T2)--(T3)--(T4)--(T5)--(T6) (T5)--(T7)--(T8);
\draw (S)--(T0);

\node (U0) at (0,2) [nodal] {};
\node (U1) at (0,3) [nodal,fill=white] {};
\node (U2) at (0,4) [nodal,fill=white] {};
\node (U3) at (0,5) [nodal,fill=white] {};
\node (U4) at (0,6) [nodal,fill=white] {};
\node (U5) at (0,7) [nodal,fill=white] {};
\node (U6) at (1,7) [nodal,fill=white] {};
\node (U7) at (0,8) [nodal,fill=white] {};
\node (U8) at (0,9) [nodal,fill=white] {};
\draw(U0)--(U1)--(U2)--(U3)--(U4)--(U5)--(U6) (U5)--(U7)--(U8);
\draw (S)--(U0);

\node (E0) at (2,2) [nodal] {};
\node (E1) at (3,2) [nodal] {};
\node (E2) at (4,2) [nodal] {};
\node (E3) at (5,2) [nodal] {};
\node (E4) at (5,3) [nodal] {};
\node (E5) at (6,2) [nodal] {};
\node (E6) at (7,2) [nodal] {};
\node (E7) at (8,2) [nodal, fill=red] {};
\draw(E0)--(E1)--(E2)--(E3)--(E4) (E3)--(E5)--(E6)--(E7);
\draw (S)--(E0);

\end{tikzpicture}
\end{center}

This shows that $E(L')=0$, and therefore the exceptional lattice of $L$ is trivial as well.

\end{proof}

\printbibliography

\end{document}